\documentclass[a4paper,12pt,final]{amsart}
\usepackage{times,a4wide,mathrsfs,amssymb,amsmath,amsthm,enumerate,xypic,tikzsymbols,dsfont,textcomp}

\newcommand{\C}{\mathbb{C}}
\newcommand{\ZZ}{\mathbb{Z}}

\newcommand{\QQ}{\mathbb{Q}}
\newcommand{\NN}{\mathbb{N}}
\newcommand{\PP}{\mathbb{P}}

\newcommand{\OO}{\mathcal O}

\newcommand{\XX}{\mathcal X}
\newcommand{\YY}{\mathcal Y}

\newcommand{\VV}{\mathcal V}
\newcommand{\WW}{\mathcal W}

\newcommand{\MM}{\mathcal M}

\newcommand{\wt}{\widetilde}

\newcommand{\one}{\mathds{1}}

\DeclareMathOperator{\Pic}{Pic}
\DeclareMathOperator{\ima}{Im}

\DeclareMathOperator{\OGr}{OGr}

\DeclareMathOperator{\spin}{Spin}

\DeclareMathOperator{\CH}{CH}
\DeclareMathOperator{\A}{A}
\DeclareMathOperator{\GDA}{GDA}
\DeclareMathOperator{\B}{B}

\newtheorem{theorem}{Theorem}[section]

\newtheorem{lemma}[theorem]{Lemma}

\newtheorem{corollary}[theorem]{Corollary}
\newtheorem{proposition}[theorem]{Proposition}

\newtheorem{remark}[theorem]{Remark}
\newtheorem{definition}[theorem]{Definition}
\newtheorem{convention}{Conventions}
\newtheorem{question}[theorem]{Question}
\newtheorem{notation}[theorem]{Notation}

\newtheorem{nonumbering}{Theorem}

\newtheorem{nonumberingp}{Proposition}

\newtheorem{nonumberingt}{Acknowledgements}

\begin{document}

\author[Robert Laterveer]
{Robert Laterveer}

\address{Institut de Recherche Math\'ematique Avanc\'ee,
CNRS -- Universit\'e 
de Strasbourg,\
7 Rue Ren\'e Des\-car\-tes, 67084 Strasbourg CEDEX,
FRANCE.}
\email{robert.laterveer@math.unistra.fr}

\title[]{Algebraic cycles and Fano threefolds of genus 7}

\begin{abstract} Let $Y$ be a very general prime Fano threefold of genus 7. We exhibit an explicit 2-cycle on $Y\times Y$ that is Abel--Jacobi trivial but non-torsion in the Chow group $\A^4(Y\times Y)$.
As a consequence, $Y$ does not admit a multiplicative Chow--K\"unneth decomposition, in the sense of Shen--Vial. 

We also show that any Fano threefold has a multiplicative Chow--K\"unneth decomposition modulo algebraic equivalence.
 \end{abstract}

\thanks{\textit{2020 Mathematics Subject Classification:}  14C15, 14C25, 14C30}
\keywords{Algebraic cycles, Chow group, motive, Beauville's ``splitting property'' conjecture, multiplicative Chow--K\"unneth decomposition, Fano threefolds, tautological ring}
\thanks{Supported by ANR grant ANR-20-CE40-0023.}

\maketitle

\section{Introduction}

Given a smooth projective variety $Y$ over $\C$, let $\A^i(Y):=\CH^i(Y)_{\QQ}$ denote the Chow groups of $Y$ (i.e. the groups of codimension $i$ algebraic cycles on $Y$ with $\QQ$-coefficients, modulo rational equivalence). The intersection product defines a ring structure on $\A^\ast(Y)=\bigoplus_i \A^i(Y)$, the {\em Chow ring\/} of $Y$ \cite{F}. 

In the special case of K3 surfaces, this ring structure has remarkable properties:

\begin{theorem}[Beauville--Voisin \cite{BV}]\label{bv} Let $S$ be a projective K3 surface. 
The $\QQ$-subalgebra
  \[   \bigl\langle  \A^1(S), c_j(S) \bigr\rangle\ \ \ \subset\ \A^\ast(S) \]
  injects into cohomology under the cycle class map.
  \end{theorem}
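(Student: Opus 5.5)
The plan is the classical Beauville--Voisin argument. The whole proof rests on one special zero-cycle. I would first produce a point class $o_S\in\A^2(S)$ such that the image of the intersection product $\A^1(S)\otimes\A^1(S)\to\A^2(S)$ lies in $\QQ\, o_S$, and such that $c_2(S)=24\, o_S$. Since $\A^0(S)=\QQ[S]$ and $\A^1(S)\cong \Pic(S)_{\QQ}$ injects into $H^2(S,\QQ)$, this gives the result immediately. The subalgebra in the statement is then spanned by $[S]$, $\A^1(S)$ and $o_S$. The class $c_1(S)$ vanishes, so it contributes nothing. The cycle class map is injective on this span, because the degree of $o_S$ is $1\neq 0$.

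To construct $o_S$, I would use the fact that $S$ contains a rational curve; this is Bogomolov--Mumford, with rational curves in every ample linear system. All points on a given rational curve $R\subset S$ are rationally equivalent, so they define a single class. Next I would show that this class does not depend on the choice of $R$. Any two rational curves $R, R'$ are dominated by ample curves, and ample curves meet each other. Taking a point $x\in R\cap R'$ shows that the point classes on $R$ and $R'$ coincide. I would also handle rational curves lying in multiples of a fixed ample class, which again meet every other curve. Call the common class $o_S$.

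For the statement about $\A^1\cdot\A^1$, I would first reduce by linearity to divisors $D$ that are ample. The reason is that $\Pic(S)$ is generated by ample classes, since any $D$ equals $(D+nH)-nH$. By Bogomolov--Mumford, such a $D$ is rationally equivalent to a sum of rational curves $\sum a_i R_i$. Then $D\cdot D'=\sum a_i\, R_i\cdot D'$. Each term $R_i\cdot D'$ is a zero-cycle supported on the rational curve $R_i$, so it is a multiple of $o_S$.

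The main obstacle is the identity $c_2(S)=24\,o_S$, which is not formal. I would follow Beauville--Voisin and argue on $S\times S$, using the small diagonal. First I would prove the decomposition
\[ \Delta_{123} = \Delta_{12}\cdot o_3 + \Delta_{13}\cdot o_2+\Delta_{23}\cdot o_1 - o_1 o_2 - o_1 o_3 - o_2 o_3 \]
in $\A^4(S^3)$. The plan is to use a family of elliptic or rational curves covering $S$, and to reduce the statement to one on the diagonal of a surface fibred in curves. I would then pull this decomposition back to the diagonal $\Delta_S\subset S\times S$. The self-intersection formula identifies $\Delta\cdot\Delta$ with $c_2(S)$ pushed forward to $\Delta$. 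Combining these expresses $c_2(S)$ as a multiple of $o_S$, and comparing degrees then forces the coefficient to be $\chi_{\mathrm{top}}(S)=24$.

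If the small-diagonal decomposition proves too hard, I have a fallback. Represent $c_2(S)$ as the degeneracy locus of sections of $T_S$, or use a sufficiently ample $L$ with $S$ embedded. The aim would be to express $c_2$ through $c_2(L)$-type classes supported on rational curves. I expect this step to be the crux either way.
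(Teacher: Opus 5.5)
The paper gives no proof of this theorem; it quotes it from \cite{BV} as background. So I measure your sketch against the Beauville--Voisin argument. Your reduction to the two facts $\A^1(S)\cdot\A^1(S)\subset\QQ\, o_S$ and $c_2(S)=24\,o_S$ is correct, and your argument for the first fact is the Beauville--Voisin one.

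One repair is needed in the construction of $o_S$. Two rational curves need not meet (for example, disjoint $(-2)$-curves), so ``take $x\in R\cap R'$'' is not available. Instead, use Bogomolov--Mumford to write an ample $H$ as $\sum n_iC_i$ with all $C_i$ rational. The support of this ample divisor is connected, so all of its points have a single class. Every curve $R$ meets it, since $H\cdot R>0$.

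\textbf{The gap: $c_2(S)=24\,o_S$.} You flag this as the crux but do not prove it. Without it, the degree-$2$ part of the algebra is $\QQ\, o_S+\QQ\, c_2(S)$, and injectivity fails to be established. Your deduction of $c_2(S)=24\,o_S$ from the small-diagonal identity is fine: intersect with $p_{12}^\ast\Delta_S$, use $\Delta_S\cdot\Delta_S=\Delta_\ast c_2(S)$, and push forward to the third factor. That identity, however, is the entire non-formal content, and your plan for it does not work as stated.

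First, rational curves cannot cover $S$: there are only countably many, and a covering family would make $S$ uniruled. Only a one-parameter family of (singular) elliptic curves is available.

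Second, fibrewise identities on those curves do not simply spread out to $S^3$. For instance, for an elliptic curve $E$ and any $o_E\in E$, one has $(x-o_E)\times(x-o_E)=0$ in $\A_0(E\times E)$, because $E^{(2)}\to\Pic^2(E)$ is a $\PP^1$-bundle. Pushing this forward, with $o_E$ chosen over a point of class $o_S$, shows that $\Gamma:=\Delta_{123}-\Delta_{12}o_3-\cdots$ restricts to zero on $\{x\}\times S\times S$ for general $x$. Bloch--Srinivas then only tells you that $\Gamma$ is supported on $D\times S\times S$ for some curve $D$. That gives no information about $p_{3\ast}(\Gamma\cdot p_{12}^\ast\Delta_S)=c_2(S)-24\,o_S$.

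Third, if you work directly on a surface $\phi\colon\mathcal E\to S$ fibred in elliptic curves, $c_2(\phi^\ast T_S)$ acquires correction $0$-cycles. These are supported on singular fibres and on the ramification curve of $\phi$, and the image of the latter in $S$ is in general not a union of rational curves. You would have to show these corrections lie in $\QQ\, o_S$, and nothing in your sketch does this.

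Your fallback does not close the gap either. For a pencil $\langle s,t\rangle\subset H^0(L)$, the section $s\,dt-t\,ds$ of $\Omega_S(2L)$ only yields $c_2(S)\equiv[\text{nodes of the pencil}]$ modulo $\A^1\cdot\A^1$. There is no reason these nodes lie on rational curves. As written, your proposal proves injectivity for the subalgebra generated by $\A^1(S)$, but not the part involving $c_2(S)$.
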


\begin{theorem}[Voisin \cite{V17}, Yin \cite{Yin}]\label{vy} Let $S$ be a projective K3 surface, and $m\in\NN$. The $\QQ$-subalgebra
  \[   R^\ast(S^m):=\bigl\langle \A^1(S), \Delta_S\bigr\rangle\ \ \ \subset\ \A^\ast(S^m) \]
  (generated by pullbacks of divisors and pullbacks of the diagonal $\Delta_S\subset S\times S$)
  injects into cohomology under the cycle class map for all $m\le 2\dim H^2_{tr}(S,\QQ)+1$ (where $H^2_{tr}(S,\QQ)$ denotes the transcendental part of cohomology). Moreover, $R^\ast(S^m)$ injects into cohomology for all $m\in\NN$ if and only if $S$ is Kimura finite-dimensional.
  \end{theorem}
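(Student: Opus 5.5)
The plan is to compare the relations among the generators of $R^\ast(S^m)$ in cohomology with the relations that can be proven in $\A^\ast(S^m)$. First I would fix a Chow--K\"unneth decomposition $\Delta_S=\pi^0+\pi^2_{alg}+\pi^2_{tr}+\pi^4$. Here $\pi^0=\mathfrak o\times S$ and $\pi^4=S\times\mathfrak o$, with $\mathfrak o$ the Beauville--Voisin class, and $\pi^2_{alg}$ is built from an orthogonal basis of $\A^1(S)$. Everything in this decomposition lies in $R^\ast(S^2)$.

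The key input in Chow is Theorem~\ref{bv} together with the Beauville--Voisin relation on $S^3$: the small diagonal $\Delta_{123}$ equals the symmetrized combination of $\Delta_{ij}\cdot\mathfrak o_k$ and $\mathfrak o_i\mathfrak o_j$ terms. Using this relation together with $D\cdot D'\in\QQ\,\mathfrak o$ and $c_2(S)=24\,\mathfrak o$, I would show that $R^\ast(S^m)$ is spanned, as a $\QQ$-vector space, by explicit ``monomials''. Each such monomial is a product over a partition of $\{1,\dots,m\}$ of three kinds of factor: pairwise transcendental factors $\pi^2_{tr}$ pulled back from $p_{ij}$, divisor pullbacks, and point classes $\mathfrak o$. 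No index may appear in two diagonal factors, since a repeated index is eliminated by the small-diagonal relation.

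The second step is invariant theory on the cohomology side. The images of these monomials in $H^\ast(S^m)$ are $O(H^2_{tr})$-invariant tensors built from the pairing $q$ on $H^2_{tr}$. By the first fundamental theorem for the orthogonal group, these span the invariants. By the second fundamental theorem, every linear relation among them is generated by the single relation expressing that $\Lambda^{d+1}H^2_{tr}\otimes\Lambda^{d+1}H^2_{tr}\to\QQ$ is zero, where $d=\dim H^2_{tr}(S,\QQ)$. In terms of cycles, this relation is the vanishing in cohomology of the antisymmetrized correspondence $\pi^2_{tr}{}^{\otimes(d+1)}\circ\frac{1}{(d+1)!}\sum_{\sigma}\mathrm{sgn}(\sigma)\sigma$, which lives on $S^{2d+2}$. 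The Chow relations obtained in the first step (orthogonality of the projectors, $\pi^2_{tr}\circ\pi^2_{tr}=\pi^2_{tr}$, and the Beauville--Voisin relations) account for all other relations. Hence the kernel of $R^\ast(S^m)\to H^\ast(S^m)$ is generated by pullbacks and pushforwards of this one element, $\Lambda^{d+1}\pi^2_{tr}$, viewed as a cycle on $S^{2d+2}$.

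Both conclusions follow from this. For $m\le 2d+1$, no such generator can occur in codimension range of $S^m$, because $2d+2$ distinct indices are needed. Therefore the two presentations agree and injectivity holds. For arbitrary $m$, injectivity is equivalent to $\Lambda^{d+1}\pi^2_{tr}=0$ in $\A^\ast(S^{2d+2})$, that is, to the vanishing of $\Lambda^{d+1}t(S)$ for the transcendental motive $t(S)$. Since the algebraic part of the motive of $S$ is a sum of Lefschetz motives, this vanishing is exactly Kimura finite-dimensionality of $h(S)$.

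I expect the main obstacle to be the second step. Matching the Chow-level reduction to the invariant-theoretic description requires care with the nondegenerate but indefinite pairing over $\QQ$ and with the interaction of divisor classes with $\pi^2_{alg}$. Above all, it requires checking that the second fundamental theorem's relation genuinely lifts to the single Chow element $\Lambda^{d+1}\pi^2_{tr}$ and to nothing else. My first attempt would be to extend scalars to $\C$, where the second fundamental theorem is cleanest. I would then descend using the fact that all relations involved are defined over $\QQ$.
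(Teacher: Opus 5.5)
The paper gives no proof of this statement. It is quoted as background, and the proof is delegated to Voisin and Yin. Your proposal is in substance Yin's argument, and it is correct:
\begin{itemize}
\item the Beauville--Voisin relations reduce $R^\ast(S^m)$ to a span of monomials in $p_{ij}^\ast\pi^2_{tr}$, divisors and $\mathfrak o$;
\item the first and second fundamental theorems for $O(H^2_{tr})$ then identify the cohomological relations, the basic one being the projector of $\Lambda^{d+1}t(S)$;
\item both directions of the ``moreover'' clause follow as you say, using Kimura's theorem that a finite-dimensional motive with zero cohomology vanishes.
\end{itemize}

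The obstacles you flag are milder than you fear.

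For Step 1, allow singletons to carry the class $1$ as well. The span of your monomials is a subalgebra because of the Chow identities
\[ \pi^2_{tr}\cdot p_1^\ast D=0,\quad \pi^2_{tr}\cdot p_1^\ast\mathfrak o=0,\quad \pi^2_{tr}\cdot\pi^2_{tr}=d\,\mathfrak o\times\mathfrak o,\quad p_{12}^\ast\pi^2_{tr}\cdot p_{23}^\ast\pi^2_{tr}=p_{13}^\ast\pi^2_{tr}\cdot p_2^\ast\mathfrak o .\]
All of these follow from the Beauville--Voisin relation. Pushing that relation forward gives $\Delta_\ast D=D\times\mathfrak o+\mathfrak o\times D$. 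The divisor terms of $\pi^2_{alg}$ cancel because $D_k\cdot D_l=\delta_{kl}\deg(D_k^2)\,\mathfrak o$.

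For Step 2, use Weyl's second fundamental theorem in its classical form. Over $\C$, the ideal of relations among the $(v_i,v_j)$ is generated by the $(d+1)$-minors of the Gram matrix. In the multidegree $(1,\dots,1)$ part, only minors with disjoint row and column sets $I,J$ survive, each multiplied by a perfect matching of the complement. Such a minor is a nonzero multiple of the projector of $\Lambda^{d+1}t(S)$, pulled back along the projection $S^m\to S^{I\cup J}$. Everything reduces to rank conditions on rational matrices, so descent from $\C$ to $\QQ$ is automatic and the indefiniteness of the form plays no role.

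Accordingly, describe the kernel as spanned by products of such pullbacks, along projections onto $2d+2$ distinct factors, with other monomials. Do not describe it via arbitrary pullbacks and pushforwards: pullbacks to partial diagonals produce elements on $S^m$ with $m\le 2d+1$. For $m\le 2d+1$ the cleanest formulation is that matching tensors on at most $2d$ points are linearly independent, so your spanning monomials have linearly independent classes.
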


The Chow ring of abelian varieties also has an interesting property: there is a multiplicative splitting, defined by the Fourier transform \cite{Beau}.

Motivated by this particular behaviour of K3 surfaces and abelian varieties, Beauville \cite{Beau3} has conjectured that for certain special varieties, the Chow ring should admit a multiplicative splitting. In the wake of this ``splitting property conjecture'' of Beauville's, Shen--Vial \cite{SV} have introduced the concept of {\em multiplicative Chow--K\"unneth decomposition\/}
(we will abbreviate this to ``MCK decomposition''). With the concept of MCK decomposition, it is possible to make concrete sense of the elusive ``splitting property conjecture'' of Beauville.

It seems difficult to understand exactly which varieties admit an MCK decomposition. To give an idea of what is known: hyperelliptic curves have an MCK decomposition, but the very general curve of genus $\ge 3$ does not have an MCK decomposition (cf. Subsection \ref{mckc} below); K3 surfaces have an MCK decomposition, but certain high degree surfaces in $\PP^3$ do not have an MCK decomposition (cf. the examples given in \cite{OG}). 

The present work is part of a program that aims to understand MCK decompositions for Fano threefolds.
In earlier work I had raised the following question:

\begin{question}[\cite{g8}]\label{ques} Let $Y$ be a Fano threefold with Picard number 1. Does $Y$ admit an MCK decomposition?
\end{question} 

This question was prompted by examples of Beauville \cite[Examples 2.1.5(a)]{Beau3} of Fano threefolds $Y$ for which
$\A^1(Y)\cdot \A^1(Y)\subset \A^2(Y)$ does not inject into cohomology. This property implies that $Y$ does not have an MCK decomposition \cite[Example 2.11]{FLV2}.
Beauville's obstruction disappears when $Y$ has Picard number 1, and so Question \ref{ques} is natural. (Also, this shows that if one expects a negative answer to Question \ref{ques} one needs to find a different obstruction to MCK.)

The answer to Question \ref{ques} is affirmative for cubic threefolds \cite{Diaz}, \cite{FLV2}, for intersections of 2 quadrics \cite{2q}, for intersections of a quadric and a cubic \cite{55}, for quartic and sextic double solids \cite{more}, as well as for prime Fano threefolds of genus 8 \cite{g8} and of genus 10 \cite{g10}.

The main result of this paper provides a negative answer to Question \ref{ques}, by exhibiting an obstruction lying in $\A^4(Y\times Y)$:

\begin{nonumbering}[=Theorem \ref{main}] Let $Y$ be a very general prime Fano threefold of genus 7, and let $H:=-K_Y\in\A^1(Y)$. Then the cycle
 \[ \begin{split} Z_Y:=  \Delta_Y\cdot (p_1)^\ast(H) -   {1\over 12}&\Bigl( (p_1)^\ast(H )\cdot(p_2)^\ast(H^3)\\
       &+(p_1)^\ast(H^2)\cdot (p_2)^\ast(H^2 ) + (p_1)^\ast(H^3 )\cdot (p_2)^\ast(H )\Bigr)     \ \ \in\ \A^4(Y\times Y)\\
       \end{split} \]
   is (Abel--Jacobi trivial but) non-zero.
   
   Consequently, $Y$ does not admit an MCK decomposition.
\end{nonumbering}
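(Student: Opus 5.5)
Two things have to be shown: that $Z_Y$ is homologically (indeed Abel--Jacobi) trivial, and that $Z_Y$ is nonzero in $\A^4(Y\times Y)$ for very general $Y$. The MCK consequence will then come from a general principle: an MCK decomposition makes certain "tautological" cycles satisfy their cohomological relations.

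\emph{Homological triviality.} Use the Künneth decomposition of $\Delta_Y$. For a prime Fano threefold of genus 7 we have $H^{2i}(Y,\QQ)=\QQ H^i$ for $0\le i\le 3$, $H^3(Y,\QQ)$ of dimension $2\cdot 7=14$, and $\deg H^3=2g-2=12$. Hence
\[ [\Delta_Y]=\tfrac1{12}\textstyle\sum_{i=0}^3 [H^i]\otimes[H^{3-i}] + \pi_3. \]
Multiplying by $p_1^*H$ kills the odd part, because $H\cup H^3(Y)\subset H^5(Y)=0$. Also $[H^4]=0$ in degree 8, so the $i=3$ term vanishes. What remains is exactly the correction term in $Z_Y$, so $Z_Y$ is homologically trivial. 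For Abel--Jacobi triviality, note that $Z_Y$ lies in $\A^4$ of a variety whose intermediate Jacobian $J^7$ is built from $H^1$, $H^3$, $H^5$ of the factors, and these pair with $H^{even}$. More concretely, $Y$ has a Chow--Künneth decomposition with $h^3(Y)\cong h^1(J(Y))(1)$. In this decomposition $Z_Y$ acts as $\pi_3\circ(\cdot H)$, which as a correspondence lands in $\A^*$ of a motive with no $H^{7}$, so it is AJ trivial.

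\emph{Non-triviality.} This is the main obstacle. My plan is to spread out over the moduli of genus-7 prime Fano threefolds, i.e. linear sections $Y=\OGr(5,10)\cap\PP^{8}$ of the spinor tenfold. I would use the following criterion: if $Z_Y=0$ for very general $Y$, then, because $Z$ is defined uniformly in the family, a Voisin-style spread argument gives the vanishing of the relative cycle $\mathcal{Z}$ on $\YY\times_B\YY$ after restricting to an open set. This in turn yields a decomposition of the diagonal type statement: the correspondence $\Delta_Y\cdot p_1^*H$ would factor through $H^{ev}$. Acting on $\A^2_{hom}(Y)\cong \A^2_{AJ}$, which is identified with the intermediate Jacobian $J(Y)$ (rationally, $\A_1$ of curves), it would force
\[ H\cdot \alpha=0 \quad\text{in } \A^3(Y) \]
for every $\alpha\in\A^2_{hom}(Y)$. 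That by itself is trivial since $\A^3_{hom}(Y)=0$, so I would act instead with $Z_Y^*$ on $\A^1_{hom}$ of a product, or use $Z_Y$ on $Y\times C$. The best route is the relation with curves: $J(Y)\cong J(C)$ for the dual genus-7 curve $C$ (Kuznetsov's semiorthogonal decomposition $D^b(Y)=\langle D^b(C),\OO,\mathcal{U}^\vee\rangle$). Via the Fourier--Mukai/correspondence $\Gamma$ between $C$ and $Y$, $Z_Y=0$ would transport to a relation on $C\times C$. This relation is the vanishing of the analogous cycle $\Delta_C\cdot p_1^*K - \ldots$, i.e. the Gross--Schoen/Ceresa-type tautological relation $\Delta_C\cdot p_1^* K_C = \lambda(\ldots)$. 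Such a relation fails for a very general curve of genus $\ge3$ (the Subsection on MCK for curves; Green--Griffiths/Yin: $K_C^2\ne c\,(\text{point})$ modulo its expected multiple, equivalently the Ceresa/Gross--Schoen cycle is non-torsion, and very general genus-7 curves arise as the dual curves). The key step to grind is computing $\Gamma^t\circ Z_Y\circ\Gamma$ in terms of $\Delta_C$, $K_C$ and points, using Mukai's description of $C$ as a linear section of the dual spinor variety.

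\emph{MCK consequence.} Suppose $Y$ had an MCK decomposition $\{\pi^i\}$. With Picard number 1 and $\A^*_{hom}$ concentrated in $\pi^3$-pieces, $H\in\A^1_{(0)}$, and the diagonal $\Delta_Y\in\A^3_{(0)}(Y\times Y)$ by MCK. So $\Delta_Y\cdot p_1^*H\in\A^4_{(0)}(Y\times Y)$, and the correction terms lie in $\A^4_{(0)}$ as well. But $\A^4_{(0)}(Y\times Y)$ injects into cohomology: the relevant graded piece is $\pi^8_{Y\times Y}\A^4$, built from $\pi^{2i}\otimes\pi^{2j}$ pieces which are Tate. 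Hence $Z_Y=0$, a contradiction.
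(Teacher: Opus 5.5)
Your homological-triviality computation and your derivation of the MCK consequence are correct and match the paper (Corollary \ref{maincor}). The Abel--Jacobi claim also goes through once you record one more fact. Besides $Z_Y=\pi^3\circ(\Delta_Y\cdot p_1^*H)$, you also have $Z_Y\circ\pi^3=Z_Y$, because $\pi^3$ kills all powers of $H$. Hence $Z_Y$ lies in $\A^4(h^3(Y)\otimes h^3(Y))$.

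The gap is the non-vanishing, which is the whole content of the theorem. Your plan rests on an identity ${}^t\Gamma\circ Z_Y\circ\Gamma=c\,Z_C$ with $c\neq 0$ in $\A^2(C\times C)$, to be ``ground out'' from Mukai's model. Both sides are homologically trivial, so no cohomological computation tells you anything about $c$. You would need explicit rational equivalences involving the Fourier--Mukai kernel, and you neither give them nor indicate how to obtain them. Moreover, in the direction you chose, assuming $Z_Y=0$ only yields $0=0$ on $C\times C$ unless $c\neq 0$ is known in advance. Via the inverse correspondence, knowing $c\neq0$ is essentially equivalent to the statement $Z_Y\neq 0$ you are trying to prove.

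The missing idea is a generically-defined-cycles (Franchetta) argument, which replaces any computation by a dimension count.

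\begin{enumerate}
\item $\bar{\YY}\times_{\bar B}\bar{\YY}\to\Sigma^+\times\Sigma^+$ is a stratified projective bundle, and $\Sigma^+$ has trivial Chow groups. Therefore the cycles on $Y\times Y$ obtained by restriction from $\YY\times_B\YY$ form the algebra $\langle p_1^*H,p_2^*H,\Delta_Y\rangle$ (Proposition \ref{Fr2}).
\item In codimension $4$ this algebra is spanned by the three decomposable terms and $\Delta_Y\cdot p_1^*H$. So its homologically trivial part is $\QQ\, Z_Y$.
\item The isomorphism $h^3(Y)\cong h^1(C)(-1)$ is induced by correspondences that exist relatively over $B$ (relative HPD kernel, Lemma \ref{gendef}). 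This gives injections $\GDA^i_B(C\times C)\hookrightarrow\GDA^{i+2}_B(Y\times Y)$ compatible with cycle class maps.
\item Now push $Z_C$, not $Z_Y$, to $Y\times Y$. Since $Z_C$ is built from $\Delta_C$ and $K_C$, it is generically defined, so its image is generically defined and homologically trivial, hence a multiple of $Z_Y$.
\item So $Z_Y=0$ would force $Z_C=0$ by injectivity. This contradicts Green--Griffiths for the very general genus-7 curve (Proposition \ref{surj}).
\end{enumerate}

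Note that the input needed is non-vanishing of the Faber--Pandharipande cycle (Green--Griffiths, genus $\ge 4$). That is a different statement from non-triviality of the Ceresa/Gross--Schoen cycle, which you also invoke.
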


This might be considered a negative result. On the other hand, the fact that there is an explicit cycle which is Abel--Jacobi trivial but non-torsion in the Chow group seems interesting; indeed, while one knows for general reasons that many such cycles exist, there are but few explicit examples of cycles of this kind. A famous example of this kind is the Faber--Pandharipande cycle on the square of a very general curve of genus at least 4 \cite{GG} --- and actually, the proof of Theorem \ref{main} is based on the non-triviality of this Faber--Pandharipande cycle for the curve dual to $Y$.

We also show that a weak version of Question \ref{ques} has a positive answer:

\begin{nonumberingp}[=Proposition \ref{main2}] Any Fano threefold admits an MCK decomposition modulo algebraic equivalence.
\end{nonumberingp}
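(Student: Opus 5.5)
The statement to prove is Proposition \ref{main2}: every Fano threefold $Y$ has an MCK decomposition modulo algebraic equivalence. The plan is to use the very simple shape of the motive of $Y$ modulo algebraic equivalence.

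\emph{Structure of the motive.} A Fano threefold is rationally connected, so $\A_0(Y)=\QQ$. The Bloch--Srinivas decomposition of the diagonal then gives $H^{p,0}=0$ for $p>0$ and shows that $\A^2(Y)_{hom}$ is controlled by the intermediate Jacobian. In particular:
\begin{itemize}
\item rational and homological equivalence agree on $\A^1(Y)$ (since $H^1(Y,\OO)=0$) and on $\A^3(Y)$;
\item algebraically trivial cycles in $\A^2(Y)$ are exactly the homologically trivial ones, because $\A^2_{hom}$ is parametrized by the intermediate Jacobian.
\end{itemize}
Consequently the motive of $Y$ splits, already modulo rational equivalence, as
\[ h(Y)=\one\oplus \one(-1)^{\rho}\oplus h^3(Y)\oplus \one(-2)^{\rho'}\oplus \one(-3). \]
Here $h^3(Y)$ is a direct summand of $h^1(J)(-1)$, with $J$ the intermediate Jacobian; this is the result of Gorchinskiy--Guletskii / Vial for threefolds with trivial $\A_0$. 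I would construct explicit Chow--K\"unneth projectors $\pi^0,\dots,\pi^6$ as follows. Set $\pi^0=[y\times Y]$ and $\pi^6=[Y\times y]$. Define $\pi^2$ and $\pi^4$ from dual bases of $\A^1(Y)$ and $\A^2(Y)/\A^2_{hom}$ under the intersection pairing. Put $\pi^3=\Delta-\sum_{i\neq 3}\pi^i$, and take $\pi^1=\pi^5=0$.

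\emph{Passing to algebraic equivalence.} Write $\B^*$ for cycles modulo algebraic equivalence. The key claim is that $h^3(Y)$ becomes the zero motive in the category of motives modulo algebraic equivalence. Indeed, $h^1(J)$ modulo algebraic equivalence has $\B^*$ of all its twists equal to zero, since the relevant Chow groups are Griffiths-type groups of points on $J$, which vanish algebraically. Since $h^3(Y)$ is a summand of $h^1(J)(-1)$, the projector $\pi^3$ is algebraically trivial, so $\pi^3=0$ in $\B^3(Y\times Y)$. I would rather prove this directly: by Bloch--Srinivas / Vial, $\pi^3$ factors through a curve $C$ as $\Gamma\circ{}^t\Gamma'$ with the middle correspondence lying in $h^1(C)$. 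It then suffices to check that $\pi^3$ acts trivially on $\B^*(Y\times Y)$, i.e. that $\pi^3\circ\pi^3=\pi^3$ vanishes modulo algebraic equivalence. Because $h^1(C)$ modulo algebraic equivalence is zero, the factorization $\pi^3=\Gamma\circ p\circ{}^t\Gamma'$ with $p=\pi^1_C$ gives $\pi^3=0$ in $\B$.

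\emph{Multiplicativity.} Modulo algebraic equivalence, the motive of $Y$ (and of every power $Y^m$) is then a sum of Tate motives. Hence $\B^*(Y^m)$ is isomorphic to its image in cohomology, via the K\"unneth decomposition of Tate summands. The MCK condition requires
\[ \pi^k\circ\Delta_Y^{sm}\circ(\pi^i\otimes\pi^j)=0 \quad\text{in } \B^6(Y^3) \text{ for } k\neq i+j. \]
This cycle lies in $\B^*(Y^3)$, which now injects into $H^*(Y^3)$. Its cohomology class vanishes because the K\"unneth decomposition is compatible with the cup product. So the MCK condition holds modulo algebraic equivalence.

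\emph{Main obstacle.} The hard step is the rigorous vanishing of $\pi^3$ modulo algebraic equivalence, together with the injectivity $\B^*(Y^m)\hookrightarrow H^*(Y^m)$. The factorization through $h^1$ of a curve is what makes this work, and I would cite Vial's results on motives of threefolds with representable $\A_0$ to obtain it.
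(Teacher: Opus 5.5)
There is a genuine gap: the key claim that $h^3(Y)$ (or $h^1(C)$) becomes the zero motive modulo algebraic equivalence, i.e.\ that $\pi^3_Y=0$ in $\B^3(Y\times Y)$, is false. Algebraically trivial cycles are homologically trivial. But $\pi^3_Y$ acts as the identity on $H^3(Y,\QQ)$, which is nonzero whenever $b_3(Y)\neq 0$; in genus $7$, $b_3=14$. Likewise $\pi^1_C$ is the identity of $h^1(C)$ and has nonzero class in $H^1(C)\otimes H^1(C)$ for $g\ge 1$.

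What you correctly observe is that the Chow groups of all Tate twists of $h^1(C)$ vanish, since $(\pi^1_C)_\ast\B^\ast(C)=\B^1_{hom}(C)=0$. But vanishing of these Chow groups does not make a motive zero. For that you would need $\B^\ast(h^1(C)\otimes h(X))=0$ for every $X$, and $X=C$ already fails because $\pi^1_C$ itself is such a nonzero class.

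Consequently your claims that $h(Y^m)$ is a sum of Tate motives modulo algebraic equivalence, and that $\B^\ast(Y^m)\hookrightarrow H^\ast(Y^m)$ for all $m$, are false. For the very general genus $7$ threefold, the Gross--Schoen cycle of the dual curve is homologically trivial but nonzero in $\B^2(C^3)$ (Remark \ref{alg}). The split inclusion $h(C^3)(-3)\subset h(Y^3)$ sends it injectively, compatibly with cycle class maps, to a nonzero homologically trivial class in $\B^5(Y^3)$. This is exactly the phenomenon behind Corollary \ref{taut}.

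Your final step only needs the codimension-$6$ statement $\B^6_{hom}(Y^3)=0$. That is true, but it must be proved by a degree count rather than by making the motive Tate, which is what the paper does. Tensor the Bloch--Srinivas split injection $h(Y)\hookrightarrow h(C)(-1)\oplus\bigoplus\one(\ast)$ three times. This embeds $\B^6(Y^3)$, compatibly with cycle class maps, into $\B^3(C^3)\oplus\bigoplus\B^\ast(C^2)\oplus\bigoplus\B^\ast(C)\oplus\QQ^r$. Checking the twists shows only zero-cycles and divisors (and fundamental classes) occur there, and for these algebraic and homological equivalence coincide. The specific codimension is essential: one degree lower, $\B^2(C^3)$ appears, and injectivity into cohomology genuinely fails.
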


This is in marked contrast to the case of curves: a very general curve of genus $\ge 4$ does not have an MCK decomposition modulo algebraic equivalence (as explained in Subsection \ref{mckc} below). Consequently, the motivic relation between a genus 7 Fano threefold and its dual curve does not behave well with respect to the multiplicative structure; this is exemplified in Corollary \ref{taut}.

 \vskip0.6cm

\begin{convention} In this paper, the word {\sl variety\/} will refer to a reduced irreducible scheme of finite type over $\C$. A {\sl subvariety\/} is a (possibly reducible) reduced subscheme which is equidimensional. 

{\bf All Chow groups will be with rational coefficients}: we will denote by $\A_j(Y)$ the Chow group of $j$-dimensional cycles on $Y$ with $\QQ$-coefficients; for $Y$ smooth of dimension $n$ the notations $\A_j(Y)$ and $\A^{n-j}(Y)$ are used interchangeably. 
The notation $\A^j_{hom}(Y)$ will be used to indicate the subgroup of homologically trivial cycles.

The contravariant category of Chow motives (i.e., pure motives with respect to rational equivalence as in \cite{Sc}, \cite{MNP}) will be denoted 
$\MM_{\rm rat}$.
\end{convention}

\section{Preliminaries}

\subsection{Prime Fano threefolds of genus 7} The classification of smooth Fano threefolds is one of the crowning glories of twentieth century algebraic geometry \cite{IP}. Fano threefolds that are {\em prime\/} (i.e. with Picard group of rank $1$ generated by the canonical divisor) come in 10 explicitly described families. In this paper we will be concerned with one of these families, that of genus 7 Fano threefolds.

\begin{notation} Given $V$ a 10-dimensional complex vector space equipped with a quadratic form, let $\spin(V)$ denote the algebraic group corresponding to the Dynkin diagram $D_5$. There are two dual
16-dimensional irreducible representations $S^+$ and $S^-$ (called the {\em half-spin representations}), corresponding to the roots $\alpha_+=\alpha_4$ resp. $\alpha_-=\alpha_5$.

Let
  \[ \Sigma^{\pm}:= \spin(V)/P(\alpha_{\pm}) \ ,\]
  where $P(\alpha_{\pm})$ denotes the parabolic subgroup associated to $\alpha_{\pm}$. The two symmetric spaces $\Sigma^+$ and $\Sigma^-$ can be identified with the two connected components of the orthogonal Grassmannian $\OGr(5,10)$, i.e.
    \[ \OGr(5,10)= \Sigma^+ \cup \Sigma^-\ .\]
 Following \cite{Ku3}, we will call $\Sigma^+$ the {\em spinor tenfold}. The spaces $\Sigma^+,\Sigma^-$ are projectively dual to one another (see for instance \cite{Ku3}, \cite{LM}).  
\end{notation}

\begin{theorem}[Mukai \cite{Mu1}, \cite{Mu2}]\label{muk} Let $Y$ be a prime Fano threefold (i.e., a smooth projective Fano threefold with $\Pic(Y)=\ZZ[K_Y]$) of genus $7$ (i.e. $-K_Y^3=12$). Then $Y$ is isomorphic to a dimensionally transverse intersection 
    \[ Y= \Sigma^+ \cap \PP(A)\ \ \subset\ \PP(S^+) \]
    for some 9-dimensional vector subspace $A\subset S^+$.

Conversely, any smooth dimensionally transverse intersection of the form $  \Sigma^+ \cap \PP(A) $ (where $A\subset S^+$ is a 9-dimensional vector space)
is a prime Fano threefold of genus $7$.

The Hodge diamond of $Y$ is
  \[ \begin{array}[c]{ccccccc}
      &&&1&&&\\
       &&0&&0&&\\
             &0&&1&&0&\\
                    0&&7&&7&&0\\
      &0&&1&&0&\\
      &&0&&0&&\\
      &&&1&&&\\
      \end{array}\]
  \end{theorem}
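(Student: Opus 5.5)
The statement is Mukai's theorem, so the paper will simply cite \cite{Mu1}, \cite{Mu2}. If I were to prove it, I would use Mukai's vector bundle method. I expect the converse direction and the Hodge diamond to be routine, and the construction of the bundle to be the real work.

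\emph{Forward direction.} First, for a prime Fano threefold of genus $7$ I would check that $H=-K_Y$ is very ample. It is base point free by Shokurov's theorem on the existence of smooth K3 sections. Prime Fano threefolds of this genus are neither hyperelliptic nor trigonal (Iskovskikh), so $|H|$ embeds $Y\subset \PP^8$ as a degree $12$ threefold cut out by quadrics. Next I would take a general smooth K3 section $S\in |H|$ and a general curve section $C\subset S$, a canonical curve of genus $7$. On $S$ I would build a stable rank $5$ bundle $E_S$ with $c_1=H$, $h^0(E_S)=10$, $h^1=h^2=0$ and $\chi$ forced by Riemann--Roch. I would get it as the unique rigid bundle with Mukai vector $v=(5,H,3)$, which satisfies $v^2=-2$, or equivalently via a Lazarsfeld--Mukai type construction from a suitable linear series on $C$.

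I would then extend $E_S$ to a bundle $E$ on $Y$, using rigidity and the vanishings $H^1(\mathcal{E}nd\,E_S(-H))=H^2(\mathcal{E}nd\,E_S(-H))=0$. The step I expect to be the main obstacle comes next. One must show that $E$ is globally generated and that $S^2H^0(E)^\vee$ carries a nondegenerate quadratic form $q$ for which every fibre $E_y^\vee\subset H^0(E)^\vee\cong \C^{10}$ is $q$-isotropic. The plan is to produce $q$ from the natural map $\wedge^2$ or $\mathrm{Sym}^2$ of sections into $H^0(\det E)=H^0(\OO_Y(H))$, and then identify the orthogonal structure. Once this is done, $Y$ maps to $\OGr(5,10)$, and by connectedness into one component, say $\Sigma^+$.

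\emph{Identifying the image.} The Plücker (spinor) line bundle pulls back to $H$, because $\det E=\OO(H)$ is twice the spinor class. So the composite $Y\to\Sigma^+\subset\PP(S^+)$ is given by a subsystem of $|H|$ with $h^0(H)=9$. Comparing degrees ($12=\deg\Sigma^+$) and using that $Y$ is cut out by quadrics, I would show that $Y=\Sigma^+\cap\PP(A)$ with $\dim A=9$, and that this intersection is dimensionally transverse.

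\emph{Converse and Hodge numbers.} $\Sigma^+$ has dimension $10$, degree $12$ and $K_{\Sigma^+}=-8H$. Cutting by $7$ general hyperplanes therefore gives a threefold with $K_Y=-H$ by adjunction, $H^3=12$, and $\Pic(Y)=\ZZ H$ by Lefschetz. Hence $Y$ is prime Fano of genus $7$. By Lefschetz, $h^{p,q}(Y)=h^{p,q}(\Sigma^+)$ off the middle degree. Since $\Sigma^+$ has only algebraic cohomology, this gives the outer entries of the diamond. The middle entry $h^{1,2}=7$ would follow from the topological Euler characteristic. That is $\chi_{top}(Y)=\int_Y c_3(T_Y)$, computed from $c(T_{\Sigma^+})$ restricted and divided by $(1+H)^7$, which gives $\chi_{top}=-10$. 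Alternatively one can use $h^{1,2}=h^1(\Omega^2_Y)$ via the Koszul complex.
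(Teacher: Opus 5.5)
You correctly anticipate that the paper only cites Mukai for the embedding $Y=\Sigma^+\cap\PP(A)$. Your converse is the paper's argument: adjunction gives index $8-7=1$ and degree $12$, and Lefschetz gives $\Pic(Y)=\ZZ H$. No generality of the hyperplanes is needed, only smoothness and the expected dimension. For $h^{1,2}=7$ the paper either refers to Iskovskikh--Prokhorov or uses $J(Y)\cong J(C)$ with $g(C)=7$ (Theorem \ref{dual}(1)). Your route via $\chi_{top}(Y)=4-b_3=-10$ is a legitimate, more computational alternative. One small correction: Lefschetz identifies $h^{p,q}(Y)$ with $h^{p,q}(\Sigma^+)$ only for $p+q<3$. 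Above the middle you need Poincar\'e duality on $Y$; for instance $b_6(\Sigma^+)=2\neq b_6(Y)$.

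Your sketch of the forward direction, however, would fail as written, at two concrete points.

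\emph{The invariants of the Mukai bundle are wrong.} For $v=(5,H,3)$ one gets $v^2=12-30=-18$, not $-2$. One also gets $\chi=r+s=8$, contradicting $h^0=10$, $h^1=h^2=0$. Worse, $c_1(E)=H$ is impossible. The Pl\"ucker class of $G(5,10)$ restricts to \emph{twice} the spinor class on $\Sigma^+$, and the spinor class must pull back to the primitive generator $H$ of $\Pic(Y)$. Hence $\det E=\OO_Y(2H)$. Your sentence ``$\det E=\OO(H)$ is twice the spinor class'' would make $H$ divisible by $2$. The correct data are $c_1=2H$ and $v(E_S)=(5,2H,5)$, with $v^2=48-50=-2$ and $\chi=10$.

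\emph{The extension step rests on a false vanishing.} By Serre duality on the K3 surface, $H^2(S,\mathcal{E}nd\,E_S(-kH))\cong H^0(S,\mathcal{E}nd\,E_S(kH))^\vee$, and this is nonzero for every $k\ge1$. Already the trace summand contributes $H^0(S,\OO_S(kH))^\vee$. For $k=1$, Riemann--Roch with $v(E_S)=(5,2H,5)$ and stability give $\hom(E_S,E_S(H))\ge152$, so even the traceless part is nonzero. These groups are exactly where the obstructions to extending $E_S$ over the successive infinitesimal neighbourhoods of $S$ in $Y$ live. So rigidity plus deformation theory does not produce $E$ on $Y$. This is a substantive part of Mukai's proof and requires additional global input.

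Finally, on the orthogonal structure: the quadric you want is a nondegenerate element of $\ker\bigl(S^2H^0(E)\to H^0(S^2E)\bigr)$. That kernel condition is precisely the requirement that every fibre $E_y^\vee\subset H^0(E)^\vee$ be isotropic. For a rank $5$ bundle, $\mathrm{Sym}^2$ of sections lands in $H^0(S^2E)$, not in $H^0(\det E)$.
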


\begin{proof} The ``conversely'' statement is just because the spinor tenfold $\Sigma^+ $ is a Fano variety with Picard number 1, index $8$ and degree $12$; the codimension $7$ linear section $Y\subset \Sigma^+$ thus has index $1$ and degree $d=12$ (i.e. genus $g=d/2+1=7$).
The first statement is proven in \cite{Mu1} and \cite{Mu2}.

To see that $h^{2,1}(Y)=7$, one can look it up in \cite{IP},  or use Theorem \ref{dual} below. 
\end{proof}

To any Fano threefold of genus 7 one can associate a curve of genus 7:

\begin{definition} Let $Y= \Sigma^+ \cap \PP(A) $ be a prime Fano threefold of genus 7, where  $A\subset S^+$ is a 9-dimensional vector space.
Using the duality $(S^+)^\ast=S^-$, let us define
  \[   C:= \Sigma^-\cap \PP(A^\bot)\ \ \subset\ \PP(S^-)\ .\]
  Then $C$ is a smooth curve of genus 7, called the {\em dual\/} of $Y$.
\end{definition}

The following will be important below:

\begin{proposition}\label{surj} A general curve of genus 7 is the dual of a prime Fano threefold of genus 7.
\end{proposition}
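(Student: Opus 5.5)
The plan is to use Mukai's description of general genus $7$ curves as linear sections of $\Sigma^-$, together with the projective duality between $\Sigma^+$ and $\Sigma^-$, and to turn the duality construction around. First, let $C$ be a general curve of genus $7$. By Mukai's theorem on curves of genus $7$, $C$ has no $g^1_4$ when it is general, i.e. its Clifford index is $3$. Under this hypothesis $C$ admits a canonical embedding as a transverse linear section
\[ C = \Sigma^- \cap \PP(W) \ \subset\ \PP(S^-) \]
with $W\subset S^-$ a $7$-dimensional subspace. This is the same as saying $\PP(W)\cong\PP^6$ is the canonical space $\PP(H^0(C,K_C)^\ast)$. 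I will take this as the input from \cite{Mu1}, \cite{Mu2}. Here $\dim\Sigma^-=10$ and $\dim\PP(S^-)=15$, so a codimension-$9$ linear section of $\Sigma^-$ has dimension $1$. Its degree is $12$, matching a canonical curve of genus $7$.

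Next, set $A:=W^\bot\subset (S^-)^\ast=S^+$. This is a $9$-dimensional subspace, and we put $Y:=\Sigma^+\cap\PP(A)$. By construction $A^\bot=W$, so the dual of $Y$ in the sense of the Definition above is exactly $C$. It remains to check that $Y$ is a prime Fano threefold of genus $7$. By the converse part of Theorem \ref{muk}, it suffices that $Y$ is smooth and dimensionally transverse, i.e. of dimension $3$.

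The main obstacle is this transversality step. I would use the standard fact from projective duality (\cite{Ku3}, homological projective duality for the spinor tenfold) that for $A\subset S^+$ with $A^\bot=W$, the section $\Sigma^+\cap\PP(A)$ is smooth of expected dimension if and only if $\Sigma^-\cap\PP(A^\bot)$ is. The reason is that singular points of $\Sigma^+\cap\PP(A)$ correspond to hyperplanes in the linear span of $A^\bot$ tangent to $\Sigma^+$, and hence to points of the dual variety $\Sigma^-$ lying in $\PP(W)$ where the section fails to be transverse.

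Should a direct citation be unavailable, I would fall back on a dimension count. The locus of $9$-dimensional $A$ giving smooth transverse $Y$ is open in $\mathrm{Gr}(9,S^+)$, and it is nonempty by Mukai's theorem, since smooth genus $7$ prime Fano threefolds exist. The same holds for the locus of $W\in\mathrm{Gr}(7,S^-)$ giving smooth transverse curves. Under $A\mapsto A^\bot$ both are nonempty Zariski open subsets of the same irreducible Grassmannian, so they intersect in a dense open set. Moreover, the map from this set of $W$ to the moduli space $M_7$ is dominant by Mukai's theorem on curves. Hence a general $C$ arises from some $W$ in the intersection, and for that $W$ the threefold $Y$ is smooth. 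This density argument avoids needing the precise duality statement, and I would present it as the proof.
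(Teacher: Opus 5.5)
Your proof is correct, but it distributes the work differently from the paper. The paper's proof is essentially a citation. It quotes Mukai for the full statement that every genus-7 curve without a $g^1_4$ is the dual of a prime Fano threefold of genus 7, so the smoothness of the complementary section $\Sigma^+\cap\PP(A)$ is part of what is cited. It then names the open set explicitly: the complement of the closed Brill--Noether locus of curves carrying a $g^1_4$.

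You use Mukai only on the curve side, namely that such a curve is a transverse section $\Sigma^-\cap\PP(W)$. You replace the input ``$\Sigma^+\cap\PP(W^\bot)$ is smooth whenever $\Sigma^-\cap\PP(W)$ is'' by a genericity argument. The two smoothness loci are nonempty open subsets of the same irreducible Grassmannian $\mathrm{Gr}(9,S^+)\cong\mathrm{Gr}(7,S^-)$, so their intersection is dense in $U_W$. It therefore still maps dominantly to $M_7$, and by Chevalley its image contains a dense open subset. This is rigorous and needs no projective-duality input. The cost is that you get an unspecified dense open subset of $M_7$ rather than the complement of the $g^1_4$-locus. Nothing is lost, since the proposition and its later use (dominance of $B\to M_7$ in the proof of Theorem \ref{main}) only need genericity.

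Your first-mentioned route would give the paper's sharper form, but as sketched it is missing one step: reflexivity for the dual pair $\Sigma^+,\Sigma^-$. Concretely, a point $w\in\Sigma^-$ is a hyperplane tangent to $\Sigma^+$ at $u$ if and only if $u$ is a hyperplane tangent to $\Sigma^-$ at $w$. This is exactly what turns a singular point of $Y$ into a singular point of $C$, and conversely.

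A minor attribution point: that a general genus-7 curve has no $g^1_4$ comes from Brill--Noether theory ($\rho(7,1,4)=-1$), not from Mukai's theorem.
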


\begin{proof} More precisely, it is proven by Mukai \cite{Mu1} that any curve of genus 7 having no linear system of type $g^1_4$ is obtained in this way. Since Brill--Noether loci are closed subsets, this defines an open subset of the moduli space of genus 7 curves.
\end{proof}

The Fano threefold $Y$ and the curve $C$ are related in multiple ways (we will not need all of this below, but we include it for completeness' sake):

\begin{theorem}\label{dual} Let $Y$ be a prime Fano threefold of genus 7, and $C$ its dual curve.

\begin{enumerate} 

\item The Jacobian of $C$ is isomorphic to the intermediate Jacobian of $Y$.

\item $Y$ and $C$ are related via homological projective duality, i.e. there is a semi-orthogonal decomposition of the derived category
  \[ D^b(Y)=\langle \OO_Y, U_Y, D^b(C) \rangle\ ,\]
  where $U_Y$ is a certain rank 5 vector bundle on $Y$.

\item The surface of conics in $Y$ is isomorphic to $C^{(2)}$.

\item The Hilbert scheme of cubic curves in $Y$ is isomorphic to $C^{(3)}$.

\item For general $Y$, the curve $C$ is isomorphic to the moduli space $M_Y(2,1,5)$ of rank 2 stable sheaves on $Y$ with $c_1=1, c_2=5$.

\item The general threefold $Y$ is isomorphic to the Brill--Noether locus of rank 2 stable bundles ${\mathcal E}$ on $C$ with determinant $K_C$ and $h^0(C,{\mathcal E})\ge 5$.

\end{enumerate}

\end{theorem}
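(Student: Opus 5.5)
This theorem collects six known results, so the plan is mostly to point to the literature for each item; only item (1) and the consistency checks between the items need any argument.

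\emph{Item (1).} I would prove this via Kuznetsov's semi-orthogonal decomposition in (2), so I establish (2) first. The decomposition
\[ D^b(Y)=\langle \OO_Y, U_Y, D^b(C)\rangle \]
is Kuznetsov's homological projective duality for the spinor tenfold \cite{Ku3}. The spinor tenfold $\Sigma^+$ has a Lefschetz decomposition with respect to $\OO(1)$ whose blocks are generated by $\OO$ and the tautological rank $5$ bundle $U$. The HP dual of $\Sigma^+$ is $\Sigma^-\subset\PP(S^-)$. Kuznetsov's general HPD theorem then gives the decomposition for the linear section $Y=\Sigma^+\cap\PP(A)$, with $C=\Sigma^-\cap\PP(A^\bot)$ as the complementary piece.

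From (2), I obtain an isomorphism of rational Hodge structures $H^3(Y,\QQ)\cong H^1(C,\QQ)$ up to Tate twist. I would do this by comparing Hochschild homology, or by using that the Fourier--Mukai kernel induces a correspondence of motives. This gives $h^{2,1}(Y)=7$, as claimed in the proof of Theorem~\ref{muk}.

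To upgrade this to an isomorphism of principally polarized abelian varieties $J(C)\cong J(Y)$, I would use the universal family instead. By (5), $C$ is a moduli space of sheaves on $Y$, so a universal sheaf ${\mathcal E}$ on $C\times Y$ exists, at least up to a Brauer twist. The Abel--Jacobi map sends $c\mapsto c_2({\mathcal E}_c)-c_2({\mathcal E}_{c_0})$ and defines a morphism $J(C)\to J(Y)$. One then checks that this morphism is compatible with the polarizations and is an isomorphism; this is classical work of Iliev, Kuznetsov and Markushevich. The alternative route is to cite Kuznetsov's result directly.

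\emph{Items (3)--(6).} These are due to Kuznetsov (conics via HPD, $C^{(2)}$) and Iliev--Markushevich for (3) and (4), and to Iliev--Markushevich / Mukai for (5) and (6). I would quote them without proof, as the paper says they are not needed later.

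The main obstacle is the polarization compatibility in item (1). I would handle it by citing the literature rather than redoing the computation. For the later use in the paper, only the motivic statement that $h^3(Y)\cong h^1(C)(-1)$ in $\MM_{\rm rat}$ is needed, and that follows from the correspondence built from $c_2({\mathcal E})$.
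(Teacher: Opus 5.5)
For items (2)--(6) you do what the paper does: quote the literature. One attribution needs correcting: the paper credits (4) to Brambilla--Faenzi \cite[Theorem 4.5]{BF}, and (2), (3) to Kuznetsov \cite[Theorems 5.4, 6.3]{Ku0}.

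The real difference is item (1). The paper cites \cite{IM} and notes that (1) follows from the birational diagram \eqref{diag}:
\begin{itemize}
\item $\sigma$ blows up the conic $q$;
\item the flop is resolved by blowing up the $14$ flopping lines on one side, equivalently the $14$ trisecants on the other;
\item $\tau$ blows up $Q$ along $C_Q\cong C$.
\end{itemize}
By Clemens--Griffiths, each blow-up changes the intermediate Jacobian, as a \emph{principally polarized} abelian variety, only by the Jacobian of the centre. The first two steps blow up rational curves, and $J(Q)=0$, so $J(Y)\cong J(C)$ follows at once, polarization included, uniformly in $Y$. Your route is more derived-categorical: HPD gives $H^3(Y,\QQ)\cong H^1(C,\QQ)(-1)$ through the Chern character of the kernel, and you then build an Abel--Jacobi map from a universal sheaf. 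It yields the rational Hodge isomorphism cleanly. However, it leaves exactly the hard part, integrality and compatibility with the polarizations, to citation, and the diagram route gives that for free.

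There is also a concrete gap in your route as written. It rests on (5), which is only asserted for \emph{general} $Y$, whereas (1) is stated for every prime Fano threefold of genus $7$. Closing it needs a specialization step. For example:
\begin{itemize}
\item Over the open subset $U\subset B$ where (5) holds, the isomorphism $R^1\pi^{\mathcal C}_\ast\ZZ\to R^3\pi^{\YY}_\ast\ZZ$ is a $\pi_1(U)$-invariant flat section.
\item Since $\pi_1(U)\to\pi_1(B)$ is surjective, it extends to a flat section over all of $B$.
\item Being an isomorphism and an isometry is a flat condition, and being a Hodge morphism is a closed condition, so both hold everywhere.
\end{itemize}

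Two smaller points:
\begin{itemize}
\item Comparing Hochschild homology only gives $h^{2,1}(Y)=7$, not an isomorphism of Hodge structures. Only your second option, the kernel viewed as a correspondence, gives the latter.
\item Your last sentence is not correct as stated. A correspondence that is an isomorphism on cohomology gives $h^3(Y)\cong h^1(C)(-1)$ only in $\MM_{\rm hom}$. Lifting this to $\MM_{\rm rat}$ needs an extra input: Kimura finite-dimensionality, Bloch--Srinivas, or the even/odd uniqueness argument used in the proof of Proposition \ref{dualchow}.
\end{itemize}
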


\begin{proof} 

\noindent
\begin{enumerate}

\item This is proven in \cite{IM}. It readily follows from diagram \eqref{diag} below.

\item This is proven by Kuznetsov in \cite[Theorem 5.4]{Ku0}.

\item This is first proven by Kuznetsov \cite[Theorem 6.3]{Ku0}; an alternative proof can be found in \cite[Proposition 2.2]{IM2}.

\item This is \cite[Theorem 4.5]{BF}.

\item This is contained in \cite[Theorem 0.1]{IM}.

\item This is proven by Mukai \cite{Mu5}, and reproven in \cite[Proposition 4.6]{IM}.

\end{enumerate}
\end{proof}

For our purposes, we also need to relate $Y$ and $C$ on the level of Chow motives, thus adding one more item to the relations of Theorem \ref{dual}:

\begin{proposition}\label{dualchow} Let $Y$ be a prime Fano threefold of genus 7, and $C$ its dual curve.
There is an isomorphism of Chow motives
    \[ h(Y)\cong  h(C)(-1)\oplus \one\oplus \one(-3)\ \ \hbox{in}\ \MM_{\rm rat}\ .\]
\end{proposition}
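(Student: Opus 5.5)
We will build the isomorphism from an explicit correspondence between $Y$ and $C$, and then check that it is an isomorphism of motives using a Bloch--Srinivas/Vial-type argument that reduces everything to cohomology.

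The natural correspondence is the incidence (universal family) construction, which is presumably the diagram \eqref{diag} referred to in the proof of Theorem \ref{dual}(1). Each point $c\in C\subset\Sigma^-$ corresponds to a maximal isotropic subspace of the opposite family. For $y\in Y\subset\Sigma^+$, the two isotropic subspaces meet in even codimension, and the locus where $\dim(\Lambda_y\cap\Lambda_c)\ge 3$ (equivalently, where the corresponding spinor points are "orthogonal" in the appropriate sense) cuts out a divisor. Concretely, we take a family of conics $P\to C$ (or of lines, as in Iliev--Markushevich \cite{IM}) with projection $P\to Y$. This gives a correspondence $\Gamma\in\A^2(C\times Y)$ inducing the isomorphism $H^1(C)\cong H^3(Y)$ of Theorem \ref{dual}(1), up to a nonzero scalar.

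The first step is to note that $h(Y)$ has a Chow--K\"unneth decomposition
\[ h(Y)=\one\oplus\one(-1)\oplus h^3(Y)\oplus\one(-2)\oplus\one(-3)\ , \]
using $H^2(Y)=\QQ H$, $H^4(Y)=\QQ\, H^2/12$, and the projectors built from $H, H^2, H^3$ and the class of a point. The algebraic parts are isomorphic to Lefschetz motives. Similarly $h(C)=\one\oplus h^1(C)\oplus\one(-1)$. The claim therefore reduces to an isomorphism $h^1(C)(-1)\cong h^3(Y)$ in $\MM_{\rm rat}$, together with the bookkeeping $h(C)(-1)=\one(-1)\oplus h^1(C)(-1)\oplus\one(-2)$.

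The second step is to get that isomorphism. Let $\Gamma$ be as above, composed with the projectors, and let $\Gamma'$ be its transpose, suitably scaled. Then $\Gamma'\circ\Gamma-\pi^1_C$ and $\Gamma\circ\Gamma'-\pi^3_Y$ are homologically trivial. We now use that $Y$ has trivial Chow groups in the relevant degrees: $\A_0(Y)=\QQ$ since $Y$ is rationally connected, and $\A^1$ is as well. By Bloch--Srinivas, $h(Y)$ is therefore of the form Lefschetz motives $\oplus\, M(-1)$ with $M$ a summand of the motive of a curve. Hence both $h^3(Y)$ and $h^1(C)(-1)$ are Kimura finite-dimensional; concretely, $h^3(Y)$ is a direct summand of $h^1(J)(-1)$ for a curve. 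For finite-dimensional motives, homologically trivial endomorphisms are nilpotent (Kimura). Consequently $\Gamma'\circ\Gamma$ and $\Gamma\circ\Gamma'$ are invertible on the respective motives, and $\Gamma$ is an isomorphism.

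Alternatively, and more simply, we can invoke Vial's result that $h^3$ of a threefold with $\A_0$ trivial and representable $\A^2_{hom}$ is isomorphic to $h^1$ of its intermediate Jacobian, twisted by $(-1)$. Combining this with Theorem \ref{dual}(1), $J(Y)\cong J(C)$, gives $h^3(Y)\cong h^1(J(Y))(-1)\cong h^1(C)(-1)$ directly.

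The main obstacle is the second step. We must check that the homological isomorphism actually lifts, which requires either the nilpotence argument with a carefully defined correspondence, or a precise citation of Vial's theorem, where the isomorphism of abelian varieties must be induced by a correspondence. In the latter case we would rely on the fact that isomorphisms of Jacobians are induced by correspondences (Lieberman) and that $h^1$ of curves and abelian varieties is determined by the abelian variety.
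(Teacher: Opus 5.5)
Your argument is essentially the first of the three routes the paper's proof sketches: a correspondence inducing $H^1(C)\cong H^3(Y)$, upgraded from $\MM_{\rm hom}$ to $\MM_{\rm rat}$ by Kimura finite-dimensionality. The route the paper spells out in detail instead uses the Iskovskikh--Prokhorov birational diagram, the blow-up formula and uniqueness of the even/odd decomposition; your Vial-type variant with $J(Y)\cong J(C)$ is a sound way to secure the correspondence.

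One correction to your incidence heuristic:
\begin{itemize}
\item $\Lambda_y$ and $\Lambda_c$ lie in opposite families, so $\dim(\Lambda_y\cap\Lambda_c)$ is even.
\item Since $\PP(A)\subset c^{\perp}$, this dimension is $\geq 2$ for every $y\in Y$.
\item The locus where it equals $4$ is not a divisor on $Y$.
\item Even a divisorial correspondence would lie in $\A^1(C\times Y)$, so it would act $H^1(C)\to H^1(Y)=0$.
\end{itemize}
So $\Gamma\in\A^2(C\times Y)$ must come from elsewhere, e.g.\ the conics $c+c_0\in C^{(2)}$, or abstractly from $J(Y)\cong J(C)$ as in your Vial variant.
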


\begin{proof} There are several ways one can go about this.

One possibility is to use the homological relation of Theorem \ref{dual}(1), check that it is induced by a correspondence and hence gives an isomorphism of homological motives
   \[ h(Y)\cong  h(C)(-1)\oplus \one\oplus \one(-3)\ \ \hbox{in}\ \MM_{\rm hom}\ .\]  
   Since both sides are Kimura-finite (i.e. finite-dimensional in the sense of \cite{Kim}; the Kimura-finiteness of $Y$ is a special case of \cite[Theorem 4]{43}), one may then upgrade to an isomorphism of Chow motives.
   
  Another possibility is to use the derived category result of Theorem \ref{dual}(2), which gives a correspondence-induced isomorphism of Chow groups
    \[ \A^2_{hom}(Y)\cong \A^1_{hom}(C)\ .\]
    Then the Bloch--Srinivas decomposition of the diagonal argument \cite{BS} yields the required isomorphism of motives.
    
 Alternatively, a more direct argument is as follows. As discovered by Iskovskikh--Prokhorov \cite[Theorem 4.4.11(iii)]{IP}, and exploited by Iliev--Markushevich \cite{IM}, \cite[Section 1, Diagram 2]{IM2}, there exist nice geometric ways of relating $Y$ and $C$. That is, given a prime genus 7 Fano threefold $Y$ with dual curve $C$, there is an explicit rationality construction for $Y$ given by the diagram
    \begin{equation}\label{diag} \begin{array}[c]{ccc}   \wt{Y} & { \buildrel  \phi \over {\dashrightarrow}} & \wt{Q}\\
                           &&\\
                         \ \ \   \downarrow{\sigma}&&\ \ \ \  \downarrow{\tau}\\
                           &&\\
                        q \subset   Y \ \ \ \ \ \ \ \ && \ \ \ \ \ \  \ \ \ \ \ \ \ Q \supset C_Q\ .\\
                        \end{array} \end{equation}
   In this diagram the morphism $\sigma$ is the blow-up of $Y$ with center a smooth conic $q\subset Y$, the morphism $\tau$ is the blow-up of the smooth 3-dimensional quadric $Q$ with center a genus 7 curve $C_Q\subset Q$ that is isomorphic to the dual curve $C$. The birational map $\phi$ is a flop where the flopping curves in $\wt{Y}$ are the (strict transforms of the) 14 lines meeting $q$ and the flopping curves in $\wt{Q}$ are the (strict transforms of the) 14 trisecants of the curve $C_Q$.
     
 One readily finds Chow--K\"unneth decompositions of $Y$ and $C$ with the property that
   \[ \begin{split} h(Y)&= \one\oplus \one(-1)\oplus h^3(Y) \oplus \one(-2) \oplus \one(-3)\ ,\\    
                       h(C_Q)&= \one\oplus h^1(C_Q)\oplus \one(-1)\ \ \ \ \ \ \ \ \ \ \ \hbox{in}\ \MM_{\rm rat}\ .\\
                       \end{split} \]
            Applying the blow-up formula (combined with the fact that $q$ and $Q$ have trivial motives), one obtains
            \[             \begin{split} h(\wt{Y})&=  h^3(Y) \oplus \bigoplus \one(\ast)\ ,\\    
                       h(\wt{Q})&=  h^1(C_Q)(-1)\oplus \bigoplus \one(\ast)\ \ \ \ \ \  \ \ \ \hbox{in}\ \MM_{\rm rat}\ .\\
                       \end{split} \]     
                 Let $Z\to\wt{Y}$ denote the blow-up of the 14 flopping lines, so that $Z$ dominates $\wt{Q}$. As lines have trivial motives, one obtains
                 \[ h^3(Y)\oplus   \bigoplus \one(\ast)= h(Z) = h^1(C_Q)(-1)\oplus \bigoplus\one(\ast)     \ \ \hbox{in}\ \MM_{\rm rat}\ . \]
   Here the motives   $   h^3(Y)$ and $h^1(C_Q)$ are oddly finite-dimensional, whereas trivial motives $\one(\ast)$ are evenly finite-dimensional (in the sense of Kimura \cite{Kim}).
   But the decomposition of a finite-dimensional motive into even and odd part is unique up to isomorphism \cite[Proposition 6.3]{Kim}, and so we obtain an isomorphism
     \[  h^3(Y) = h^1(C_Q)(-1)\ \ \hbox{in}\ \MM_{\rm rat}\ . \]
     Adding some trivial motives, this proves the proposition.
     
  (Remark: Iliev--Markushevich construct 2 other explicit birational maps involving $Y$ and $C$ \cite[Diagrams 1 and 3]{IM2}. One could just as well use one of those other birationalities in the proof above.)   
        \end{proof}

\subsection{MCK decomposition}
\label{ss:mck}

\begin{definition}[Murre \cite{Mur}] Let $X$ be a smooth projective variety of dimension $n$. We say that $X$ has a {\em CK decomposition\/} if there exists a decomposition of the diagonal
   \[ \Delta_X= \pi^0_X+ \pi^1_X+\cdots +\pi_X^{2n}\ \ \ \hbox{in}\ \A^n(X\times X)\ ,\]
  such that the $\pi^i_X$ are mutually orthogonal idempotents and $(\pi_X^i)_\ast H^\ast(X,\QQ)= H^i(X,\QQ)$.
  
  (NB: ``CK decomposition'' is shorthand for ``Chow--K\"unneth decomposition''.)
\end{definition}

\begin{remark} Murre has conjectured that any smooth projective variety should have a CK decomposition \cite{Mur}, \cite{J4}. 
\end{remark}

\begin{definition}[Shen--Vial \cite{SV}] Let $X$ be a smooth projective variety of dimension $n$, and let $\Delta_X^{sm}\in \A^{2n}(X\times X\times X)$ denote the class of the small diagonal
  \[ \Delta_X^{sm}:=\bigl\{ (x,x,x)\ \vert\ x\in X\bigr\}\ \subset\ X\times X\times X\ .\]
  An {\em MCK decomposition\/} is defined as a CK decomposition $\{\pi_X^i\}$ of $X$ that is {\em multiplicative\/}, i.e. it satisfies
  \[ \pi_X^k\circ \Delta_X^{sm}\circ (\pi_X^i\times \pi_X^j)=0\ \ \ \hbox{in}\ \A^{2n}(X\times X\times X)\ \ \ \hbox{for\ all\ }i+j\not=k\ .\]
  
 (NB: ``MCK decomposition'' is shorthand for ``multiplicative Chow--K\"unneth decomposition''.) 
  \end{definition}
  
  \begin{remark} The small diagonal (when considered as a correspondence from $X\times X$ to $X$) induces the {\em multiplication morphism\/}
    \[ \Delta_X^{sm}\colon\ \  h(X)\otimes h(X)\ \to\ h(X)\ \ \ \hbox{in}\ \MM_{\rm rat}\ .\]
 Let us assume $X$ has a CK decomposition
  \[ h(X)=\bigoplus_{i=0}^{2n} h^i(X)\ \ \ \hbox{in}\ \MM_{\rm rat}\ .\]
  By definition, this decomposition is multiplicative if for any $i,j$ the composition
  \[ h^i(X)\otimes h^j(X)\ \to\ h(X)\otimes h(X)\ \xrightarrow{\Delta_X^{sm}}\ h(X)\ \ \ \hbox{in}\ \MM_{\rm rat}\]
  factors through $h^{i+j}(X)$.
  
  If $X$ has an MCK decomposition, then setting
    \[ \A^i_{(j)}(X):= (\pi_X^{2i-j})_\ast \A^i(X) \ ,\]
    one obtains a bigraded ring structure on the Chow ring: that is, the intersection product sends $\A^i_{(j)}(X)\otimes \A^{i^\prime}_{(j^\prime)}(X) $ to  $\A^{i+i^\prime}_{(j+j^\prime)}(X)$.
    
      It is conjectured that for any $X$ with an MCK decomposition, one has
    \[ \A^i_{(j)}(X)\stackrel{??}{=}0\ \ \ \hbox{for}\ j<0\ ,\ \ \ \A^i_{(0)}(X)\cap \A^i_{hom}(X)\stackrel{??}{=}0\ ;\]
    this is related to Murre's conjectures B and D, that have been formulated for any CK decomposition \cite{Mur}, \cite{J4}.


For more background on the concept of MCK, and for examples of varieties with an MCK decomposition, we refer to \cite[Section 8]{SV}, as well as \cite{V6}, \cite{SV2}, \cite{FTV}, \cite{37},
  \cite{38}, \cite{39}, \cite{40}, \cite{44}, \cite{FLV2}, \cite{60}, \cite{55}, \cite{59}, \cite{NOY}.
    \end{remark}

\subsection{MCK and curves}
\label{mckc} 
In this subsection, we aim to show that the notion of MCK is interesting already for curves.

\begin{proposition}[Shen--Vial \cite{SV}]\label{hyper} Any hyperelliptic curve admits an MCK decomposition.
\end{proposition}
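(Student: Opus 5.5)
The plan is to write down an explicit Chow--K\"unneth decomposition for a hyperelliptic curve $C$ of genus $g$, built from a Weierstrass point, and then reduce multiplicativity to a single relation in $\A^2(C\times C\times C)$. Let $o\in C$ be a Weierstrass point and set
\[ \pi^0_C:= o\times C\ ,\ \ \ \pi^2_C:= C\times o\ ,\ \ \ \pi^1_C:=\Delta_C-\pi^0_C-\pi^2_C\ \ \ \hbox{in}\ \A^1(C\times C)\ . \]
These are mutually orthogonal idempotents acting on cohomology as the K\"unneth projectors, so they form a CK decomposition. The key input is the behaviour of $o$: since $2o$ is the hyperelliptic divisor, every point $x$ satisfies $x+\iota(x)=2o$ in $\A^0$-terms (more precisely $x+\iota x\sim 2o$ as divisors, $\iota$ the hyperelliptic involution). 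Also $K_C=(2g-2)o$ in $\A^1(C)$, so the canonical class is proportional to $o$.

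The multiplicativity conditions $\pi^k\circ\Delta^{sm}\circ(\pi^i\times\pi^j)=0$ for $i+j\neq k$ are mostly formal. Whenever one of $i,j$ equals $0$, the composition reduces to multiplication by the fundamental class, and the condition holds automatically. Whenever $i$ or $j$ equals $2$, it reduces to intersecting with the point class $o$, and all degree bookkeeping works because $x\cdot o$ is either $0$ or a multiple of $o$. The only nontrivial case is $i=j=1$, where one must show that $h^1\otimes h^1\to h(C)$ factors through $h^2$. Equivalently, we need $\pi^0\circ\Delta^{sm}\circ(\pi^1\times\pi^1)=0$ (which is immediate for degree/dimension reasons) and $\pi^1\circ\Delta^{sm}\circ(\pi^1\times\pi^1)=0$. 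By the standard expansion (as in Shen--Vial), the latter is equivalent to the vanishing of the modified small diagonal of Gross--Schoen with base point $o$:
\[ \Delta^{sm}-\bigl(\Delta_{12}\times o+\Delta_{13}\times o+\Delta_{23}\times o\bigr)_{\rm perm}+\bigl(o\times o\times C+o\times C\times o+C\times o\times o\bigr)=0\ \ \hbox{in}\ \A^2(C^3)\ , \]
together with the relation $\Delta_C\cdot p_i^\ast o$ being compatible, which follows from $K_C\propto o$.

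The main obstacle is therefore the vanishing of this Gross--Schoen cycle $\Gamma_o$ for a Weierstrass point. I would prove it using the involution. The operator $\iota$ acts on $h^1(C)$ as $-1$, because $x+\iota x\sim 2o$ gives $\iota_\ast=-\mathrm{id}$ on $\A^1_{hom}$, and indeed $\Delta+\Gamma_\iota = 2(o\times C + C\times o)$ in $\A^1(C\times C)$. Hence $(\iota,\iota,\iota)_\ast$ acts on $\pi^1\otimes\pi^1\otimes\pi^1$-components of $\A^2(C^3)$ by $-1$. The cycle $\Gamma_o$ lies in that component, since it equals $(\pi^1\times\pi^1\times\pi^1)_\ast\Delta^{sm}$. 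On the other hand, $\Gamma_o$ is invariant under $\iota^{\times3}$ because $\Delta^{sm}$ and $o$ are $\iota$-invariant. Thus $\Gamma_o=-\Gamma_o$, so it vanishes with $\QQ$-coefficients (this is the Gross--Schoen argument). Feeding this back into the expansion gives $\pi^1\circ\Delta^{sm}\circ(\pi^1\times\pi^1)=0$, completing the proof of multiplicativity.
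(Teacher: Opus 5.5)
Your proposal is correct and follows the paper's route: the CK decomposition attached to a Weierstrass point $o$ is multiplicative exactly when the Gross--Schoen cycle $\Gamma_3(C,o)$ vanishes, and Gross--Schoen proved that vanishing. Two small points: because $\pi^1_C$ is symmetric, $\pi^1\circ\Delta^{sm}\circ(\pi^1\times\pi^1)$ is literally equal to $\Gamma_3(C,o)$, so your extra clause about $K_C$ is not needed; and where the paper simply cites Gross--Schoen, you reproduce their argument via $\Delta_C+\Gamma_\iota=2(o\times C+C\times o)$, which makes $\iota^{\times 3}$ act by $-1$ on $\Gamma_3(C,o)$ while also fixing it.
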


\begin{proof} This is \cite[Example 8.16]{SV}. The idea is that for a curve $C$, the CK decomposition defined by the choice of a degree 1 zero-cycle $z\in \A^1(C)$ is MCK if and only if the {\em modified diagonal\/} 
  \[\begin{split} \Gamma_{3}(C,
			z):=&\Delta^{sm}_{C}-p_{12}^{*}(\Delta_{C})p_{3}^{*}(z)-p_{23}^{*}(\Delta_{C})p_{1}^{*}(z)-p_{13}^{*}(\Delta_{C})p_{2}^{*}(z)\\
			&+p_{1}^{*}(z)p_{2}^{*}(z)+p_{1}^{*}(z)p_{3}^{*}(z)+p_{2}^{*}(z)p_{3}^{*}(z) \ \ \ \ \ \hbox{in}\ \A^2(C^3)\ \\
			\end{split}\]  
  vanishes. But Gross--Schoen \cite{GS} have shown the vanishing
     \[  \Gamma_3(C,z)=0\ \ \hbox{in} \ \A^2(C\times C\times C) \]
     for $C$ hyperelliptic and $z\in C$ a Weierstra{\ss}  point.
\end{proof}

  \begin{proposition}\label{no} A very general curve of genus $g\ge 3$ does not admit an MCK decomposition.
  \end{proposition}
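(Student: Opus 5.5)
The plan is to show that an MCK decomposition on a curve $C$ forces the vanishing of a Gross--Schoen type modified diagonal. For a very general curve of genus $g\ge 3$ this vanishing fails by Ceresa-type non-vanishing results. The argument has three steps.

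\emph{Step 1: reduction to a standard CK decomposition.} Any CK decomposition of a curve $C$ is determined, up to the usual freedom, by degree one zero-cycles. Concretely, $\pi^0_C$ and $\pi^2_C$ must be of the form $p_1^\ast(a)$ resp.\ $p_2^\ast(b)$ for degree 1 cycles $a,b\in\A^1(C)$, and $\pi^1_C=\Delta_C-\pi^0_C-\pi^2_C$.

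Suppose the decomposition is MCK. The condition that $\pi^2\circ\Delta^{sm}\circ(\pi^0\times\pi^0)$ and its variants vanish should force $a=b$. Alternatively, I would use the general fact (Shen--Vial) that for an MCK decomposition the class of $\pi^{2n}$ is determined by a zero-cycle $o$ with $\pi^0=o\times X$ and $\pi^{2n}=X\times o$. So we may assume the decomposition is the one defined by a single degree 1 zero-cycle $z$.

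\emph{Step 2: MCK is equivalent to $\Gamma_3(C,z)=0$.} As recalled in the proof of Proposition \ref{hyper} (citing \cite[Proposition 8.14]{SV}), the CK decomposition attached to $z$ is multiplicative if and only if the modified diagonal $\Gamma_3(C,z)$ vanishes in $\A^2(C^3)$.

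\emph{Step 3: non-vanishing for very general $C$.} It remains to show that $\Gamma_3(C,z)\neq 0$ for every degree one $z$ when $C$ is very general of genus $g\ge 3$. Here I would invoke the Gross--Schoen/Ceresa link. Pushing $\Gamma_3(C,z)$ forward to the Jacobian via the Abel--Jacobi map based at $z$, or taking its Abel--Jacobi image in $J^3(C^3)$, recovers the image of the Ceresa cycle $C-C^-$, up to a nonzero multiple and terms depending on $z$.

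Ceresa proved that $C-C^-$ is not algebraically trivial for a very general curve of genus $\ge 3$. Its Griffiths Abel--Jacobi image (the normal function) is nonzero, and that image does not depend on the choice of $z$ modulo torsion. Hence $\Gamma_3(C,z)$ is not even algebraically trivial for any $z$. This also gives the claim in the introduction about MCK modulo algebraic equivalence for $g\ge 4$ via Faber--Pandharipande/Green--Griffiths. For $g=3$ Ceresa's theorem covers it as well.

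The main obstacle is Step 3, specifically making the dependence on $z$ harmless. The cleanest route is to note that changing $z$ to $z'$ alters $\Gamma_3$ by terms that map to zero under the relevant Abel--Jacobi/Künneth component in $H^3$ of $C^3$ of type $H^1\otimes H^1\otimes H^1$. The projection of $\Gamma_3$ to $h^1\otimes h^1\otimes h^1$ is then independent of $z$, and it is non-zero by Ceresa. Step 1 is routine.
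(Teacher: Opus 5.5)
Your route is sound in outline, but it is not the proof the paper writes out, and one step in Step 3 is wrong as formulated.

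\textbf{Comparison with the paper.} You are following the Ceresa-cycle argument, which the paper only cites (\cite[Example 3.3]{FLV2}) for $g\ge 3$. The proof actually written out covers only $g\ge 4$ and goes as follows.
\begin{enumerate}
\item \cite[Proposition 3.1]{FLV2} forces the MCK decomposition to be the one defined by $z=\frac{1}{2g-2}K_C$.
\item Self-duality puts $\Delta_C$ in $\A^1_{(0)}(C\times C)$.
\item Hence the Faber--Pandharipande cycle $Z_C$ lies in the one-dimensional group $\A^2_{(0)}(C\times C)=\QQ[z\times z]$. Being of degree $0$, it must vanish, contradicting Green--Griffiths.
\end{enumerate}
That argument stays on $C\times C$ and is pure Chow-theoretic bookkeeping plus \cite{GG}. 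It is also the template later transported to $Y$ in Theorem \ref{main}. However, $Z_C$ is a degree-$0$ zero-cycle, hence algebraically trivial, so this route can never say anything modulo algebraic equivalence, and it misses $g=3$. Your aside attributing the mod-algebraic statement to Faber--Pandharipande/Green--Griffiths therefore has it backwards.

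Your approach works on $C^3$ and handles all $z$ at once, so it needs no identification of $z$. It covers $g\ge 3$ and gives the stronger statement of Remark \ref{alg}, which is the one used in Corollary \ref{taut}. Steps 1--2 are fine. For instance, with $\pi^0=a\times C$ and $\pi^2=C\times b$, one computes $\pi^2\circ\Delta^{sm}\circ(\pi^1\times\pi^2)=(b-a)\times C\times b$, and its vanishing forces $a=b$.

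\textbf{The faulty step.} Your mechanism for removing the $z$-dependence in Step 3 fails as written.
\begin{itemize}
\item Replacing $z$ by $z'$ changes $\Gamma_3(C,z)$ by, among other terms, $-\sum p_{ij}^\ast(\Delta_C)\cdot p_k^\ast(z'-z)$.
\item One has $\mathrm{AJ}(\Delta_C\times w)=[\Delta_C]\otimes \mathrm{AJ}(w)$.
\item The class $[\Delta_C]$ has a non-zero K\"unneth component $\omega\in H^1\otimes H^1$.
\item So the $H^1\otimes H^1\otimes H^1$-component of $\mathrm{AJ}(\Gamma_3)$ changes by $\sum\omega_{ij}\otimes\mathrm{AJ}(z'-z)_k$. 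These terms do not cancel and are non-zero for general $z,z'$.
\end{itemize}
The same happens for the Abel--Jacobi image of $C_z-C_z^-$. Only the projection to the primitive part $\wedge^3_0H^1(C)$ (that is, modulo $\omega\wedge H^1$) is independent of the base point; that projection is the Ceresa normal function.

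\textbf{The repair.} The difference $\Gamma_3(C,z)-\Gamma_3(C,z')$ is algebraically trivial, because $z'-z$ is. So it suffices to show $\Gamma_3(C,z)\not\sim_{\rm alg}0$ for one $z$. For very general $C$, the Hodge structure $\wedge^3_0H^1$ is irreducible with $h^{3,0}=\binom{g}{3}\neq 0$. Abel--Jacobi images of algebraically trivial cycles lie in an abelian subvariety attached to a level-one sub-Hodge structure, so they project to zero there. The projection of $\mathrm{AJ}(\Gamma_3(C,z))$, on the other hand, is a non-zero multiple of the Ceresa normal function, which is non-zero by Ceresa.

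\textbf{Minor.} The relevant intermediate Jacobian is $J^2(C^3)$, built from $H^3$, not $J^3$.
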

  
  \begin{proof} This can be proven using the Ceresa cycle \cite[Example 3.3]{FLV2}. Let us however give an easy alternative proof that works for $g\ge 4$; this alternative proof will be relevant to the main result of the present paper.
  
Let $C$ be a very general curve of genus $g$ at least 4 and assume, by contradiction, that $C$ admits an MCK decomposition. Then this MCK decomposition must be defined by the 0-cycle $z={1\over 2g-2} K_C$ \cite[Proposition 3.1]{FLV2}, and so we have $K_C\in \A^1_{(0)}(C)$. We now consider $C\times C$ with the product MCK decomposition. Since an MCK decomposition is necessarily self-dual \cite[Proposition 2.7]{FLV2}, we have $\Delta_C\in \A^1_{(0)}(C\times C)$ and hence
  \[  \Delta_C\cdot (p_1)^\ast(K_C)\ \ \in\ \A^2_{(0)}(C\times C)\ .\]
  It follows that also 
  \[  Z_C:= \Delta_C\cdot (p_1)^\ast(K_C)  -   {1\over 2g-2} K_C\times K_C\ \   \in\ \A^2_{(0)}(C\times C)\ .\]
 The zero-cycle $Z_C$ is of degree 0 and $\A^2_{(0)}(C\times C)=\QQ[z\times z]$ is one-dimensional, and so we have
   \[  Z_C=0\ \ \ \hbox{in}\ \A^2(C\times C)\ .\]
   But this is absurd: the zero-cycle $Z_C$ (which is called the Faber--Pandharipande cycle, and which is the ``interesting 0-cycle'' in the title of \cite{GG}) is known to be non-zero in $\A^2(C\times C)$ for the very general curve of genus $g\ge 4$ (this is the main result of \cite{GG}, an alternative proof is given in \cite{Y2}). 
    \end{proof}    
    
    \begin{remark} The converse of Proposition \ref{hyper} is not true. That is, there exist non-hyperelliptic curves that have an MCK decomposition \cite{moi}, \cite{QZ}, \cite{LS}, \cite{LS2}.
     \end{remark} 
 
 \begin{remark}\label{alg} The argument using the Ceresa cycle given in \cite[Example 3.3]{FLV2} actually proves something stronger than Proposition \ref{no}: it proves that the very general curve of genus $\ge 3$ does not admit an MCK decomposition modulo algebraic equivalence.
 \end{remark}

 \subsection{The Franchetta property}

  \begin{definition}\label{def} Let $\XX\to B$ be a smooth projective morphism, where $\XX, B$ are smooth quasi-projective varieties, and let us write $X_b$ for the fiber over $b\in B$. We say that $\XX\to B$ has the {\em Franchetta property in codimension $j$\/} if the following holds: for every $\Gamma\in A^j(\XX)$ such that the restriction $\Gamma\vert_{X_b}$ is homologically trivial for the very general $b\in B$, the restriction $\Gamma\vert_{X_b}$ is zero in $A^j(X_b)$ for all $b\in B$.
 
 We say that $\XX\to B$ has the {\em Franchetta property\/} if $\XX\to B$ has the Franchetta property in codimension $j$ for all $j$.
 \end{definition}
 
 This property is studied in \cite{PSY}, \cite{BL}, \cite{FLV}, \cite{FLV3}.
 
 \begin{definition} Given a family $\XX\to B$ as in Definition \ref{def}, we will use the shorthand
   \[ \GDA^j_B(X_b):=\ima\bigl( \A^j(\XX)\to \A^j(X_b)\bigr)\ \ \ \subset\ \A^j(X_b) \ \]
   ($\GDA^\ast()$ stands for ``generically defined cycles'').
  \end{definition}
 
 With this definition, the Franchetta property for $\XX\to B$ is saying that generically defined cycles inject into cohomology.
 
\subsection{Franchetta for $Y$}

 \begin{notation}\label{not} Let $\Sigma^+$ be the spinor tenfold, and let $\OO_{\Sigma^+}(1)$ be the primitive polarization corresponding to the embedding
 $\Sigma^+\subset\PP(S^+)$. Let 
   \[ B\ \subset\ \bar{B}:=\PP H^0(\Sigma^+,\OO_{\Sigma^+}(1)^{\oplus 7})\] 
   denote the Zariski open subset parametrizing smooth dimensionally transverse linear sections of codimension 7.  
   
   Let
   \[ \YY\ \to\ B \]
   denote the universal family of smooth $3$-dimensional linear sections (in view of Theorem \ref{muk}, this is exactly the universal family of prime Fano threefolds of genus $7$).
  \end{notation}
 
 \begin{proposition}\label{Fr1} Let $\YY\to B$ be the universal family of prime Fano threefolds of genus $7$ (Notation \ref{not}). 
 The family $\YY\to B$ has the Franchetta property.
  \end{proposition}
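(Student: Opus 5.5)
Since $\A^j_{hom}(Y)$ vanishes for $j\neq 2$, I only need the Franchetta property in codimension $2$. In the other codimensions it is automatic: $\A^0(Y)=\QQ$, $\A^3(Y)=\QQ$, and $\A^1(Y)=\QQ[H]$ injects into cohomology. By Proposition \ref{dualchow} the motive $h(Y)$ is $h(C)(-1)$ plus trivial motives, so $\A^2_{hom}(Y)\cong \A^1_{hom}(C)\cong \operatorname{Pic}^0(C)_{\QQ}$. That group is huge, so the content of the statement is that no nonzero homologically trivial codimension-$2$ cycle can be defined on the whole family $\YY\to B$. Concretely, I must show that every $\Gamma\in\A^2(\YY)$ restricts on each fibre $Y_b$ to a multiple of $H^2$.

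I would use the standard stratification (projective bundle) argument of \cite{PSY}, \cite{FLV}. Consider the incidence variety $\bar{\YY}:=\{(x,s)\in \Sigma^+\times\bar{B}\ \vert\ s(x)=0\}$. The line bundle $\OO_{\Sigma^+}(1)$ is very ample, so sections vanishing at a given point $x$ form a linear subspace of fixed codimension $7$. Hence the projection $\bar{\YY}\to\Sigma^+$ is a projective bundle, and by the projective bundle formula
\[ \A^\ast(\bar{\YY})=\bigoplus_k \A^{\ast-k}(\Sigma^+)\cdot \xi^k ,\]
with $\xi$ pulled back from $\bar B$. The family $\YY$ is an open subset of $\bar{\YY}$, so $\A^\ast(\bar{\YY})\to\A^\ast(\YY)$ is surjective by localization. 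Restricting to a fibre $Y_b$, the class $\xi$ becomes a constant. Therefore
\[ \GDA^\ast_B(Y_b)=\ima\bigl(\A^\ast(\Sigma^+)\to\A^\ast(Y_b)\bigr).\]

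Next I need the Chow ring of $\Sigma^+$. The spinor tenfold is a homogeneous space, so it has a cellular decomposition and $\A^\ast(\Sigma^+)\to H^\ast(\Sigma^+,\QQ)$ is an isomorphism. Its image in $\A^2(Y_b)$ is thus spanned by restrictions of Schubert classes of codimension $2$, namely $H^2$ and one further class $\sigma_2$ from the second Schubert row. I need to control the restriction of $\sigma_2$ to $Y_b$. Its cohomology class lies in $H^4(Y_b,\QQ)=\QQ$, so for some rational $\lambda$ the difference $\sigma_2\vert_{Y_b}-\lambda H^2$ is a generically defined cycle that is homologically trivial. So the argument must show that the restriction itself is proportional to $H^2$ in $\A^2$, not merely in cohomology.

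This is the main obstacle. My first attempt is to show that $\A^2(\Sigma^+)$ is spanned by $H^2$ together with classes that are intersections with linear subspaces, and ideally that $\A^2(\Sigma^+)$ has rank one. I expect that to fail, since $b_4(\OGr(5,10))=2$. The fallback is to represent the second generator by a linear subspace $\PP^k\subset\Sigma^+$; the spinor tenfold contains many linear spaces of small codimension. A codimension-$2$ class $c$ is enough to handle, since $\A^2(Y_b)$ is the target. Then $c\vert_{Y_b}$ is a multiple of $c\cdot H^{7}$ pushed to $Y_b$, which is supported on a linear space meeting $Y_b$. Its intersection with $Y_b$ is a union of lines, or a point class in dimension zero, and lines on $Y_b$ are rationally equivalent up to multiples of $H^2$ by a known argument. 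Alternatively, I would argue via the identification $h^3(Y)\cong h^1(C)(-1)$: the image of $\A^\ast(\Sigma^+)$ consists of $\mathrm{Spin}$-invariant classes, which cannot project nontrivially onto the non-constant part $\operatorname{Pic}^0(C)$ uniformly across the family. This is made precise by spreading out and using that $\operatorname{Pic}^0$ of the universal curve has no nonzero generically defined sections. Either route concludes that every generically defined class in $\A^2(Y_b)$ is proportional to $H^2$, which proves the Franchetta property.
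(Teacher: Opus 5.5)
Your projective-bundle reduction to $\GDA^\ast_B(Y_b)=\ima\bigl(\A^\ast(\Sigma^+)\to\A^\ast(Y_b)\bigr)$ is exactly the paper's first step. The difficulty you then call the ``main obstacle'' does not exist. $\OGr(5,10)$ is disconnected: it is the disjoint union of $\Sigma^+$ and $\Sigma^-$, two copies of the spinor tenfold. So $b_4(\OGr(5,10))=2$ means $b_4(\Sigma^+)=1$. Concretely, the Schubert classes of $\Sigma^+$ are indexed by strict partitions with parts in $\{1,2,3,4\}$, with Poincar\'e polynomial $\prod_{i=1}^4(1+q^i)$, and only the partition $(2)$ has weight $2$; there is no second class $\sigma_2$. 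Since $\A^\ast(\Sigma^+)\cong H^\ast(\Sigma^+,\QQ)$, this gives $\A^2(\Sigma^+)=\QQ\,H^2$, which is the fact the paper quotes. The image in $\A^2(Y_b)$ is then $\QQ\,H^2\vert_{Y_b}$, which injects into $H^4(Y_b,\QQ)$ because $H^3=12\neq 0$. That observation is the entire remaining argument, and your proposal explicitly rejects it.

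As written, the codimension-$2$ case is therefore not proved, because neither fallback would work even if a second class existed.

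\emph{First fallback.} Linear subspaces of $\Sigma^+$ have dimension at most $4$, so they cannot represent an $8$-dimensional (codimension-$2$) class. The class $c\cdot H^7=i_\ast(c\vert_{Y_b})\in\A_1(\Sigma^+)\cong\QQ$ loses exactly the homologically trivial part of $c\vert_{Y_b}$ that you need to control. Moreover, lines on $Y_b$ are \emph{not} rationally equivalent modulo $\QQ H^2$: differences of lines have non-trivial Abel--Jacobi image in $J(Y_b)\cong J(C)$.

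\emph{Second fallback.} Spin-invariance of classes on $\Sigma^+$ imposes nothing on their restrictions to $Y_b$, which is not Spin-stable. Also, by Lemma \ref{gendef}, the claim that the dual curves over $B$ carry no nonzero generically defined classes in $\A^1_{hom}$ is equivalent to the codimension-$2$ Franchetta statement you are proving. Making it non-circular would require three things: the strong Franchetta theorem over $\mathcal{M}_7$, an argument transferring it along $B\to\mathcal{M}_7$, and the generic definedness of the motivic isomorphism. None of these is supplied.

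Replace your last paragraph by the Betti-number computation above and the proof is complete.
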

 
 \begin{proof} The argument is inspired by \cite{PSY}, which is about the Franchetta property for K3 surfaces with a Mukai model.
 
 Let $\bar{\YY}\subset\bar{B}\times \Sigma^+$ denote the projective closure. As the line bundle $\OO_{\Sigma^+}(1)$ is base point free, the projection $\bar{\YY}\to \Sigma^+$ is a $\PP^r$-fibration.
 Let $Y=Y_b, b\in B$ be a smooth fiber.
 Using the projective bundle formula, one readily obtains (cf. for instance \cite[Proof of Lemma 1.1]{PSY}) that
   \[ \GDA^\ast_B(Y) =\ima\bigl( \A^\ast(\Sigma^+)\to \A^\ast(Y)\bigr)\ .\]
   Since $\A^j_{hom}(Y)=0$ for $j\not=2$, it only remains to ascertain that the cycle class map induces injections
     \begin{equation}\label{in} \ima\bigl(  \A^2(\Sigma^+)\to \A^2(Y)\bigr)\ \to\ H^4(Y,\QQ)=\QQ\ .\end{equation}
 It is known that $\A^2(\Sigma^+)$ is one-dimensional (see \cite{Tam}, or alternatively \cite[Theorem 4.16]{Ku3} where it is even proven that $\A^2(\Sigma^+)_{\ZZ}=\ZZ$), which ends the proof.
 \end{proof}
   
For later use, we observe the following: 

\begin{lemma}\label{gendef} The correspondences inducing the isomorphism of Proposition \ref{dualchow} are generically defined (with respect to $B$). In particular, there is an isomorphism
  \[ \GDA^2_B(Y)\  \cong\ \GDA^1_B(C)\oplus \QQ^2\ ,\]
  compatible with cycle class maps.
  \end{lemma}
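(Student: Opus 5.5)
We aim to show that every ingredient of the proof of Proposition \ref{dualchow} can be carried out in the family over $B$ (possibly after a finite base change, or after passing to a Zariski open $B^0\subset B$). We will take the third argument there, via the birational diagram \eqref{diag}. The natural parameter space is
\[ \mathcal{P}:=\{(b,q)\mid b\in B,\ q\subset Y_b \hbox{ a smooth conic}\},\]
which is an open subset of the relative surface of conics. By Theorem \ref{dual}(3) this surface is the relative $C^{(2)}$, so $\mathcal{P}\to B$ is smooth with irreducible fibres over a suitable open $B^0$. The Iskovskikh--Prokhorov construction is canonical once $q$ is fixed (projection from the conic, the quadric $Q$, and the curve $C_Q$), so it works in families over $\mathcal{P}$. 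This gives relative blow-ups $\wt{\YY}\to\YY_{\mathcal P}$ and $\wt{\mathcal Q}\to\mathcal Q$, and a relative flop.

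The flopping loci are the 14 lines meeting $q$ and the 14 trisecants of $C_Q$. Over $\mathcal P$ these form a finite étale cover of degree 14, possibly not split. Blowing up this relative curve still gives a smooth family $\mathcal Z\to\mathcal P$ dominating both sides. The blow-up formula holds relatively with generically defined projectors. The pieces involved are the motive of the conic (a family of $\PP^1$'s, relatively trivial), the lines (a $\PP^1$-bundle over a finite étale cover), and the relative quadric, all of which have relative cellular structures. So we get relative correspondences $\Gamma\in \A^\ast(\YY_{\mathcal P}\times_{\mathcal P}\mathcal C_{\mathcal P})$ which induce, fibrewise, the isomorphism $h^3(Y)\cong h^1(C)(-1)$ modulo trivial motives.

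The main obstacle is the final step of Proposition \ref{dualchow}. There the isomorphism $h^3(Y)\cong h^1(C_Q)(-1)$ was obtained abstractly, through Kimura's uniqueness of the even/odd decomposition, and it is not clear that the inverse and the projectors this produces are generically defined. We would handle this concretely. Since $Y$ and $C$ have trivial motive outside $h^3$ and $h^1$, the CK projectors $\pi^3_Y$ and $\pi^1_C$ can be written with generically defined cycles ($H$, $H^2$, $H^3$, and $K_C$, the latter restricted from the relative canonical class). Then the composite
\[ \Psi:=\pi^1_C\circ{}^t\Gamma\circ\pi^3_Y \]
is generically defined, and it is a fibrewise isomorphism in $\MM_{\rm hom}$. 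By Kimura-finiteness, $\Psi$ is also an isomorphism in $\MM_{\rm rat}$. Its inverse is a polynomial in $\Psi\circ{}^t\Psi$ (Cayley--Hamilton applied in the finite-dimensional algebra $\mathrm{End}(h^1(C))$ modulo a nilpotent ideal), so the inverse is generically defined as well. We also have to make sure that restricting from $\mathcal P$ to $B$ does no harm. Here we would use that cycles generically defined over $\mathcal P$ restrict fibrewise to a cycle on $Y_b\times C_b$. The only remaining problem is that different points $q$ could give different correspondences. We would fix this by pushing forward along $\mathcal P\to B$ and dividing by the degree over $B$ of a suitable multisection (a relative point of $\mathcal P$, obtained by intersecting with general generically defined divisors). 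Alternatively, Lemma \ref{gendef} only needs generically defined correspondences over some dominant base, and $\GDA_B$ can be interpreted up to such base change.

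Finally, on Chow groups, $\Psi_\ast$ and its inverse map $\GDA^2_B(Y)\cap\A^2_{hom}$ and $\GDA^1_B(C)\cap\A^1_{hom}$ into each other. The trivial summands $\one,\one(-3)$ and the classes $H^2$, $K_C$ account for the $\QQ^2$ (the summands $\QQ H^2$ and the degree part of $C$). Compatibility with cycle class maps holds because all the correspondences act on cohomology.
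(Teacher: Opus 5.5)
Your route differs from the paper's. The paper never chooses a conic. It restricts the HPD kernel $P$, which lives on the incidence variety $Q(V,W)\subset\Sigma^+\times\Sigma^-$, to the family over $B$. Base change for Fourier--Mukai kernels identifies its fibre over $b$ with the kernel of $D^b(C_b)\hookrightarrow D^b(Y_b)$, and the paper then takes Chern character times Todd class. The result is a cycle on $\YY\times_B\mathcal C$ by construction, so generic definability over $B$ is automatic, and the inverse is the transpose by the symmetry of HPD.

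You instead make diagram \eqref{diag} relative over the space $\mathcal P$ of pairs $(b,q)$, and replace the abstract Kimura step by an explicit inversion. That inversion is a good idea: it shows that \emph{any} generically defined correspondence inducing $H^3(Y)\cong H^1(C)$ has a generically defined inverse. You must add that the polynomial can be taken independent of $b$. The characteristic polynomial $\chi$ of $u=\Psi\circ{}^t\Psi$ on $H^1(C_b)$ has coefficients that are intersection numbers of generically defined cycles, hence constant. Moreover $\chi(u_b)=0$ already in Chow, because $\mathrm{End}_{\MM_{\rm rat}}(h^1(C))=\mathrm{End}(J(C))\otimes\QQ$ injects into cohomology, so no nilpotent ideal is needed. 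The price of your route is that everything is defined over $\mathcal P$ rather than $B$, and that is where the gaps are.

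\emph{Descent to $B$.} Your fallback of reading $\GDA_B$ ``up to base change'' is not acceptable. The lemma feeds the map \eqref{inj} in the proof of Theorem \ref{main}, and the target $\GDA_B(Y\times Y)$ is controlled by Propositions \ref{Fr1} and \ref{Fr2}, i.e.\ by the projective-bundle structure of $\bar{\YY}\to\Sigma^+$ over $\bar B$. Over $\mathcal P$ the class $[q]$ becomes generically defined. Then $[q]-\frac16H^2$ is homologically trivial but non-zero for general $q$, since the Abel--Jacobi map on the surface of conics $\cong C^{(2)}$ is non-constant. So already Proposition \ref{Fr1} fails over $\mathcal P$, and the multisection descent is mandatory.
\begin{itemize}
\item You must first restrict to the multisection $\mathcal M$ and only then push forward; the fibres of $\mathcal P\to B$ are surfaces, so pushing forward along $\mathcal P\to B$ directly does not work.
\item You must explain why the average $\frac1d\sum_{q\in\mathcal M_b}\Psi_{b,q}$ is still an isomorphism. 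Since $\mathcal P_b$ is connected, the cohomology class of $\Psi_{b,q}$ does not depend on $q$. Since $\mathrm{Hom}_{\MM_{\rm rat}}(h^3(Y_b),h^1(C_b)(-1))\cong\mathrm{End}(J(C_b))\otimes\QQ$ injects into cohomology, all $\Psi_{b,q}$ (and their inverses) coincide in $\MM_{\rm rat}$, so averaging is harmless.
\end{itemize}

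\emph{Identifying $C_Q$ with $C$.} Diagram \eqref{diag} produces correspondences on $Y\times C_Q$, whereas the lemma concerns the HPD-dual curve $C\subset\PP(S^-)$. Writing $\mathcal C_{\mathcal P}$ silently assumes a relative isomorphism $\mathcal C_Q\cong\mathcal C\times_B\mathcal P$ over an open subset. You need to construct it, for instance from the relative Isom scheme, using $\mathrm{Aut}(C)=1$ for general $C$.

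\emph{The $\QQ^2$ bookkeeping.} Your accounting for the $\QQ^2$ does not work. In codimension $2$ the summands $\one$ and $\one(-3)$ contribute nothing, and $\QQ H^2$ corresponds to $\QQ K_C$. So the correspondences give $\GDA^2_B(Y)\cong\GDA^1_B(C)$, and $\GDA^2_B(Y)=\QQ H^2$ by the proof of Proposition \ref{Fr1}. A $\QQ^2$ appears only when summing over all codimensions: $\GDA^\ast_B(Y)\cong\GDA^{\ast-1}_B(C)\oplus\QQ^2$.
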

  
  \begin{proof}
We use the fact (Theorem \ref{dual}(2)) that $Y$ and $C$ are related via homological projective duality (HPD). It is a general  fact that the Fourier--Mukai kernel defining the equivalence of Theorem \ref{dual}(2) exists relatively over $B$; actually this is inherent in the HPD package \cite{Kuz}. More precisely, the equivalence of Theorem \ref{dual}(2) is a consequence of the fact that $V:=\Sigma^+$ and $W:=\Sigma^-$ are HPD dual \cite{Ku0}. Let $Q(V,W)\subset V\times W$ denote the incidence correspondence, and let $P\in \hbox{Perf}(Q(V,W))$ be the Fourier--Mukai kernel defining the HPD functor $\Phi_P\colon D^b(W)\to D^b({\mathcal H}_{V})$ (in the language of \cite{Kuz}). Let $B\subset\bar{B}$ be the parameter space of smooth linear sections of codimension 7 as before, and let
  \[ Q_B(V,W):= Q(V,W)\times_{\bar{B}} B \]
  be the base change. The restriction 
  \[ P_B:= P\vert_{Q_B(V,W)}  \]
  defines a relative Fourier--Mukai functor over $B$. By the base change compatiblity of Fourier--Mukai functors \cite[Lemma 2.37 and Theorem 2.39]{Kuz}, for every $b\in B$, the restriction $P_b:=P_B\vert_b$ is the Fourier--Mukai kernel inducing the equivalence between the primitive components of $D^b(V_b)$ and $D^b(W_b)$ (in our setting,  $V_b$ is the Fano threefold $Y_b$, and $W_b$ is the curve $C_b$ dual to $Y_b$). Taking the relative Chern character, normalizing with the relative Todd class, and 
  pushing forward under the inclusion $Q_B(V,W)\subset \VV\times_B \WW$, one obtains a relative correspondence (over $B$) whose fiberwise restriction induces the isomorphism of motives $h^3(Y_b)\cong h^1(C_b)$. Since HPD duality is a duality relation, the inverse equivalence is obtained by exchanging $V$ and $W$, and so the inverse isomorphism of Chow motives is induced by the transpose correspondence.
 \end{proof}

%
%

 \subsection{Franchetta for $Y^2$?} The next step is to ask whether the family $\YY\times_B \YY\to B$ has the Franchetta property.

 \begin{proposition}\label{Fr2} Let $\YY\to B$ be as in Notation \ref{not}. Then
     \[ \GDA^\ast_B(Y^2) =  \bigl\langle  (p_1)^\ast(K_Y), (p_2)^\ast(K_Y), \Delta_Y\bigr\rangle\ .\]
     In particular, the family $\YY\times_B \YY\to B$ has the Franchetta property in codimension $\le 3$.
      \end{proposition}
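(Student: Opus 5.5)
We follow the strategy of Proposition \ref{Fr1}, now applied to the family $\YY\times_B\YY\to B$. Let $\bar{\YY}\subset \bar{B}\times\Sigma^+$ be the projective closure, and consider $\bar{\YY}\times_{\bar B}\bar{\YY}\subset \bar{B}\times \Sigma^+\times\Sigma^+$. The projection to $\Sigma^+\times\Sigma^+$ is stratified by the diagonal. Over the open complement $(\Sigma^+\times\Sigma^+)\setminus\Delta_{\Sigma^+}$ it is a projective bundle: a point $(x,y)$ with $x\neq y$ imposes $2\cdot 7$ independent linear conditions, because $\OO_{\Sigma^+}(1)$ is very ample and so separates points. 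Over the diagonal it is a $\PP^{r}$-bundle, with the same $r$ as in the proof of Proposition \ref{Fr1}.

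The argument of \cite[Proof of Lemma 1.1]{PSY} applies to this situation. It combines the projective bundle formula on each stratum with the localization exact sequence. It gives that $\GDA^\ast_B(Y\times Y)$ is generated by two pieces: the image of restriction $\A^\ast(\Sigma^+\times\Sigma^+)\to \A^\ast(Y\times Y)$, and the pushforwards under $\Delta_Y\colon Y\to Y\times Y$ of the image of $\A^\ast(\Sigma^+)\to\A^\ast(Y)$.

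Next we identify these two pieces. Since $\Sigma^+$ is homogeneous, it has a cellular decomposition, so $\A^\ast(\Sigma^+\times\Sigma^+)=\A^\ast(\Sigma^+)\otimes\A^\ast(\Sigma^+)$. The image of $\A^\ast(\Sigma^+)$ in $\A^\ast(Y)$ is $\GDA^\ast_B(Y)$. By Proposition \ref{Fr1}, $\GDA^\ast_B(Y)$ injects into $H^{even}(Y)$, which is one-dimensional in each degree and spanned by powers of $H=-K_Y$. Hence $\GDA^\ast_B(Y)=\QQ[H]/(H^4)$, and the first piece is generated by $(p_1)^\ast(K_Y)$ and $(p_2)^\ast(K_Y)$. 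For the second piece, the projection formula gives $(\Delta_Y)_\ast(H^i)=\Delta_Y\cdot (p_1)^\ast(H^i)$. Together these establish the displayed equality.

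For the Franchetta statement in codimension $\le3$, we enumerate the generators. In codimension $j\le 2$ only products $(p_1)^\ast H^a\cdot(p_2)^\ast H^b$ occur. These lie in $\A^\ast(Y)\otimes\A^\ast(Y)$ with $a,b\le 3$, which injects into cohomology because $\GDA(Y)$ does. In codimension $3$ we get in addition $\Delta_Y$ itself. So we must show that any relation
\[ \lambda\Delta_Y+\sum_{a+b=3}c_{ab}\,(p_1)^\ast H^a(p_2)^\ast H^b \]
that vanishes in cohomology also vanishes in $\A^3(Y\times Y)$. The class of $\Delta_Y$ has a nonzero Künneth component in $H^3\otimes H^3$, since $h^{2,1}=7$, and none of the product terms do. Therefore $\lambda=0$, and we are reduced to products, which we have already handled.

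The main obstacle is the stratification step: checking that the fibers over the off-diagonal locus really have constant dimension, which holds because $\Sigma^+\subset\PP(S^+)$ is cut out by quadrics and so contains no trisecant-type degeneracy for pairs of points. One then makes the localization sequence argument precise. We take this from \cite{PSY} or \cite{FLV}, where the same setup is treated for pairs of points.
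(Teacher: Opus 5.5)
Your proposal is correct and follows essentially the same route as the paper: you stratify $\bar{\YY}\times_{\bar B}\bar{\YY}\to\Sigma^+\times\Sigma^+$ along the diagonal (the paper packages this as property $(\ast_2)$ and \cite[Proposition 5.2]{FLV}), use the K\"unneth structure of $\A^\ast(\Sigma^+\times\Sigma^+)$ together with Proposition \ref{Fr1}, and settle codimension 3 via the nonzero $H^3\otimes H^3$ component of $[\Delta_Y]$. The only blemish is your closing paragraph: constancy of the off-diagonal fibre dimension has nothing to do with quadrics or trisecants, since it follows from $\OO_{\Sigma^+}(1)$ separating points, exactly as you already argued.
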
 
  
  \begin{proof} 
The line bundle $\OO_{\Sigma^+}(1)$ is very ample, and so this set-up verifies the property ($\ast_2$) of \cite{FLV}. This means that $\bar{\YY}\times_{\bar{B}} \bar{\YY}\to \Sigma^+\times\Sigma^+$ is a ``stratified projective bundle'' (in the sense of \cite{FLV}), and hence \cite[Proposition 5.2]{FLV} implies that there is equality
    \begin{equation}\label{gda} \GDA^\ast_B(Y\times Y) = \bigl\langle  \ima\bigl(\A^\ast(\Sigma^+\times \Sigma^+)\to \A^\ast(Y\times Y)\bigr),\, \Delta_Y\bigr\rangle\ .\end{equation}
   
As noted before, the spinor tenfold has trivial Chow groups (that is, the motive $h(\Sigma^+)$ is a sum of twisted Lefschetz motives, see \cite{Tam} or \cite[Theorem 4.16]{Ku3}) and so
  \[  \A^\ast   (\Sigma^+\times \Sigma^+)= \A^\ast (\Sigma^+)\otimes \A^\ast(\Sigma^+)\ .\]
 It follows that \eqref{gda} simplifies to
   \[ \begin{split}   \GDA^\ast_B(Y\times Y) &= \bigl\langle  \ima\bigl(\A^\ast(\Sigma^+\times \Sigma^+)\to \A^\ast(Y\times Y)\bigr),\, \Delta_Y\bigr\rangle\ \\
                                                                    &=  \bigl\langle (p_i)^\ast \ima\bigl( \A^\ast(\Sigma^+)\to \A^\ast(Y)\bigr),\, \Delta_Y\bigr\rangle   \\
                                                                    &=  \bigl\langle  (p_i)^\ast(K_Y),  \Delta_Y\bigr\rangle   \ , \\
                                                                    \end{split} \]
    where the last equality uses Proposition \ref{Fr1}.
    
    The Franchetta property in codimension $\le 2$ now follows immediately from the K\"unneth formula in cohomology. For codimension 3, the above 
    gives
      \[  \GDA^3_B(Y\times Y)=   \bigl\langle  (p_i)^\ast(K_Y)\bigr\rangle \oplus \QQ[\Delta_Y]\ .\]
      Since the cohomology class of the diagonal $\Delta_Y$ is linearly independent of the decomposable classes $ \bigl\langle  (p_i)^\ast(K_Y)\bigr\rangle $ (otherwise $H^3(Y,\QQ)$ would be zero, because decomposable correspondences act as zero on odd-degree cohomology), it follows that the Franchetta property holds in codimension 3.
         \end{proof}

  \begin{remark} As we will see below, the Franchetta property for $Y^2$ fails in codimension 4.
   \end{remark}

\section{Main result}

\begin{theorem}\label{main} Let $Y$ be a very general prime Fano threefold of genus 7, and let $H:=-K_Y\in \A^1(X)$. Then the cycle
   \[ \begin{split} Z_Y:=  \Delta_Y\cdot (p_1)^\ast(H) -   {1\over 12}&\Bigl( (p_1)^\ast(H )\cdot(p_2)^\ast(H^3)\\
       &+(p_1)^\ast(H^2)\cdot (p_2)^\ast(H^2 ) + (p_1)^\ast(H^3 )\cdot (p_2)^\ast(H )\Bigr)     \ \ \in\ \A^4(Y\times Y)\\
       \end{split} \]
   is (Abel--Jacobi trivial but) non-zero.
   \end{theorem}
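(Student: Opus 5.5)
The plan is to transport the problem to the dual curve $C$ through the motivic isomorphism of Proposition \ref{dualchow}, and then use the non-vanishing of the Faber--Pandharipande cycle $Z_C$ \cite{GG}. Write $\Gamma\in \A^2(C\times Y)$ and $\Psi\in\A^2(Y\times C)$ for the correspondences that induce the mutually inverse isomorphisms
\[ h^1(C)(-1)\cong h^3(Y)\ \ \hbox{in}\ \MM_{\rm rat}\ .\]
By Lemma \ref{gendef} both are generically defined over $B$, and $\Psi\circ\Gamma=\pi^1_C$. Consider the product correspondences
\[ (\Gamma\times\Gamma)_\ast\colon\ \A^2(C\times C)\to \A^4(Y\times Y)\ ,\qquad (\Psi\times\Psi)_\ast\colon\ \A^4(Y\times Y)\to\A^2(C\times C)\ .\]

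\textbf{Step 1: $Z_Y$ spans the homologically trivial generically defined cycles.} By Proposition \ref{Fr2}, $\GDA^4_B(Y\times Y)$ is spanned by $\Delta_Y\cdot(p_1)^\ast(H)$ and the classes $(p_1)^\ast(H^a)\cdot(p_2)^\ast(H^b)$ with $a+b=4$. Note that $\Delta_Y\cdot(p_1)^\ast(H)=\Delta_Y\cdot(p_2)^\ast(H)$.
\begin{itemize}
\item The classes $(p_1)^\ast(H^a)\cdot(p_2)^\ast(H^b)$ are linearly independent in cohomology, by the Künneth formula.
\item Write the Künneth decomposition of $[\Delta_Y]$. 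Its odd part lies in $H^3\otimes H^3$, and cupping with $H$ sends it into $H^5\otimes H^3=0$. Using $H^3=12[pt]$, the even part gives
\[ [\Delta_Y\cdot (p_1)^\ast(H)]={1\over 12}\Bigl([H\times H^3]+[H^2\times H^2]+[H^3\times H]\Bigr)\ .\]
\end{itemize}
Hence $Z_Y$ is homologically trivial, and $\GDA^4_B(Y\times Y)\cap \A^4_{hom}(Y\times Y)=\QQ\, Z_Y$.

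\textbf{Step 2: comparison with $Z_C$.} The Faber--Pandharipande cycle $Z_C$ is generically defined over $B$ for the family of dual curves, and it is homologically trivial. Since $\Gamma$ is generically defined, $(\Gamma\times\Gamma)_\ast Z_C$ lies in $\GDA^4_B(Y\times Y)$ and is homologically trivial. By Step 1,
\[ (\Gamma\times\Gamma)_\ast Z_C=\lambda\, Z_Y\ \ \hbox{for some}\ \lambda\in\QQ\ .\]
Applying $(\Psi\times\Psi)_\ast$ gives
\[ (\pi^1_C\times\pi^1_C)_\ast Z_C=\lambda\,(\Psi\times\Psi)_\ast Z_Y\ .\]
It remains to see that the left side equals $Z_C$ (up to a nonzero scalar). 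Choose the Chow--Künneth decomposition of $C$ determined by $z={1\over 12}K_C$. Then the components of $Z_C$ along $\pi^i_C\times\pi^j_C$ with $i$ or $j\neq1$ are generically defined, homologically trivial $0$-cycles that factor through $\A^\ast$ of a point or through $\A^1(C)$ of a curve. Such cycles are multiples of $z\times z$ and of $z\times K_C$, and since they have degree $0$ they vanish. I expect this bookkeeping, i.e. checking that $Z_C$ lives purely in $h^1(C)\otimes h^1(C)$, to be the main technical obstacle. Concretely, it amounts to verifying that $(\pi^0_C\times \mathrm{id})_\ast Z_C=0$ and $(\pi^2_C\times\mathrm{id})_\ast Z_C=0$, which follow from the identities $\Delta_C\cdot p_1^\ast K_C$ pushed to either factor giving $K_C$.

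\textbf{Step 3: conclusion.} If $Z_Y=0$ for very general $Y$, then Step 2 gives $Z_C=0$ for the dual curve. By Proposition \ref{surj}, the dual curve of a very general $Y$ is a very general curve of genus $7$. This contradicts \cite{GG}, which proves $Z_C\neq0$ for the very general curve of genus $\ge4$. Hence $Z_Y\neq0$.

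The argument also shows $\lambda\neq0$, so $Z_Y=\lambda^{-1}(\Gamma\times\Gamma)_\ast Z_C$. Since $Z_C$ is Albanese-trivial (it is a degree-$0$ $0$-cycle on $C\times C$ whose Albanese image is a combination of $K_C-(2g-2)z$-type terms, hence zero), and Abel--Jacobi maps are compatible with correspondences, $Z_Y$ is Abel--Jacobi trivial.
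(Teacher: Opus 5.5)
Your proposal is correct and is essentially the paper's argument. The paper also uses Proposition \ref{Fr2} to see that $Z_Y=0$ would make generically defined cycles on $Y\times Y$ inject into cohomology, transports $Z_C$ through the generically defined motivic isomorphism of Lemma \ref{gendef} to contradict Green--Griffiths, and gets Abel--Jacobi triviality from $Z_C$ being sent to a nonzero multiple of $Z_Y$.

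The only difference is bookkeeping. The paper uses the full isomorphism $h(Y)\cong h(C)(-1)\oplus\one\oplus\one(-3)$, which gives an injection $\GDA^i_B(C\times C)\hookrightarrow\GDA^{i+2}_B(Y\times Y)$ directly. You use only $h^1(C)(-1)\cong h^3(Y)$, so you must check that $Z_C$ lies in $h^1(C)\otimes h^1(C)$. That check is correct and follows from $\deg Z_C=0$ and $(p_i)_\ast Z_C=0$.
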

   
 \begin{proof} It is readily checked (by letting the correspondence $Z_Y$ act on cohomology) that $Z_Y$ is homologically trivial.
 
 Next, let us assume by contradiction that $Z_Y$ is zero in $\A^4(Y\times Y)$. Then Proposition \ref{Fr2} would imply that 
   \[  \GDA^\ast_B(Y\times Y) =  \bigl\langle  (p_1)^\ast \GDA^\ast_B(Y),   (p_2)^\ast \GDA^\ast_B(Y)\bigr\rangle \oplus \QQ [\Delta_Y]\ .\]
   In view of Proposition \ref{Fr1} this would imply that $Y\times Y$ satisfies the Franchetta property.
   
   However, this is nonsensical, as can be seen on the side of the dual curve $C$. The motivic relation between $Y$ and $C$ (Proposition \ref{dualchow}), combined with Lemma \ref{gendef}, gives injective 
   maps
     \begin{equation}\label{inj} \GDA^i_B(C\times C) \ \hookrightarrow\ \GDA^{i+2}_B(Y\times Y) \end{equation}
     compatible with cycle class maps. Thus, $C\times C$ would also have the Franchetta property (with respect to $B$). But then the Faber--Pandharipande cycle
       \[   Z_C:= \Delta_C\cdot (p_1)^\ast(K_C)  -   {1\over 12} K_C\times K_C\ \   \in\ \A^2_{}(C\times C) \]
       would be zero, in contradiction with the result of Green--Griffiths \cite{GG}.
       
  The above argument actually shows that the map \eqref{inj} sends $Z_C$ to a non-zero multiple of $Z_Y$; as $Z_C$ is Albanese trivial this implies that $Z_Y$ is Abel--Jacobi trivial.     
  \end{proof}  
 
 \begin{corollary}\label{maincor} Let $Y$ be a very general prime Fano threefold of genus 7. Then $Y$ does not admit an MCK decomposition.
 \end{corollary}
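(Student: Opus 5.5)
We argue by contradiction: assume $Y$ (very general, genus 7) admits an MCK decomposition $\{\pi^i_Y\}$, and show that the cycle $Z_Y$ of Theorem \ref{main} would then vanish. This mirrors the argument for curves in Proposition \ref{no}.

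First we pin down the bigrading on $\A^\ast(Y)$. Since $\A^j_{hom}(Y)=0$ for $j\not=2$, and $\A^1(Y)$, $\A^3(Y)$ are one-dimensional, these groups are spanned by $H$ and $H^3$ and lie in $\A^\ast_{(0)}(Y)$. Indeed, any CK decomposition has $\pi^{2j}_Y$ acting as the identity on $\A^j(Y)$ when $\A^j_{hom}(Y)=0$, because the other projectors kill classes whose cohomology class lives in the wrong degree. The class $H^2$ is the product of two elements of $\A^1_{(0)}(Y)$, so by multiplicativity $H^2\in\A^2_{(0)}(Y)$. Likewise $H^3\in \A^3_{(0)}(Y)$, which is one-dimensional.

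Next we pass to $Y\times Y$ with the product CK decomposition, which is again MCK by \cite{SV}. An MCK decomposition is self-dual \cite[Proposition 2.7]{FLV2}, so $\Delta_Y\in\A^3_{(0)}(Y\times Y)$. Multiplicativity then gives $\Delta_Y\cdot (p_1)^\ast(H)\in \A^4_{(0)}(Y\times Y)$. Each decomposable term $(p_1)^\ast(H^a)\cdot(p_2)^\ast(H^b)$ also lies in $\A^4_{(0)}(Y\times Y)$, since products of grade-$0$ classes have grade $0$. Hence $Z_Y\in \A^4_{(0)}(Y\times Y)$.

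It remains to show that $\A^4_{(0)}(Y\times Y)$ injects into cohomology; since $Z_Y$ is homologically trivial, this forces $Z_Y=0$, contradicting Theorem \ref{main}. This is the main step, and we use the explicit motive. By Proposition \ref{dualchow},
\[
h(Y)=\one\oplus\one(-1)\oplus h^3(Y)\oplus\one(-2)\oplus\one(-3), \qquad h^3(Y)\cong h^1(C)(-1).
\]
The piece $\A^4_{(0)}(Y\times Y)$ is cut out by the projector $\sum_{i+j=4}\pi^i_Y\times\pi^j_Y$, and it collects three kinds of contributions.

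\begin{itemize}
\item Products of trivial motives: these contribute Chow groups of $\one(\ast)$, which inject into cohomology.
\item Mixed terms $h^3\otimes h^{1}$: these do not occur, since $h^1(Y)=0$.
\item The term $h^3(Y)\otimes h^1(Y)$: this is the only place odd parts could enter in total degree $4$, and it also vanishes because $h^1(Y)=0$.
\end{itemize}

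In degree $8=2\cdot 4$, odd pieces can enter only through $h^3\otimes h^5$, and $h^5(Y)=0$. Therefore $h^8(Y\times Y)$ is a sum of tensor products of Lefschetz motives, which equal $\one(-4)$. Its $\A^4$ is finite-dimensional and maps isomorphically onto cohomology.

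The subtle point is whether the MCK decomposition on $Y$ must have this standard shape, with $h^3(Y)$ the only non-Lefschetz summand. This holds for any CK decomposition of $Y$, because $H^{odd}(Y)=H^3(Y)$ and the even parts are spanned by algebraic classes; such parts are isomorphic to Lefschetz motives by Kimura finiteness. So $\A^4_{(0)}(Y\times Y)=(\pi^8_{Y\times Y})_\ast\A^4(Y\times Y)$ injects into $H^8(Y\times Y)$, and the contradiction with Theorem \ref{main} follows.
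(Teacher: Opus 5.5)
Your proof is correct and follows essentially the same route as the paper. You force $H^i\in\A^i_{(0)}(Y)$ and use self-duality to get $\Delta_Y\in\A^3_{(0)}(Y\times Y)$, hence $Z_Y\in\A^4_{(0)}(Y\times Y)$. This group is three-dimensional and injects into cohomology (you show this via $h^8(Y\times Y)\cong\one(-4)^{\oplus 3}$ rather than by writing down $\pi^{2i}_Y=\frac{1}{12}H^{3-i}\times H^i$ as the paper does), contradicting Theorem \ref{main}. One slip: $\A^4_{(0)}(Y\times Y)$ is cut out by $\sum_{i+j=8}\pi^i_Y\times\pi^j_Y$, not $\sum_{i+j=4}$, so your bullet points about total degree $4$ are beside the point, but the degree-$8$ argument that follows them is the right one.
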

 
 \begin{proof} 
 Since $Y$ is Kimura finite-dimensional, any CK decomposition for $Y$ must be of the form
   \[ \begin{split}  \pi^{2i}_Y &=  a_i \times b_i   \ \ \ \ (i=0,\ldots,3)\ ,\\
                            \pi^3_Y&= \Delta_Y - \sum_{i=0}^3 \pi^{2i}_Y\ ,\\
                            \end{split}\]
         for some cycles $a_i  \in \A_i(Y), b_i   \in \A^i(Y)$.

   Assuming there exists an MCK decomposition for $Y$, we have $H\in \A^1_{(0)}(Y)=\A^1(Y)$ and so by multiplicativity we must have
     \begin{equation}\label{0}    H^i \ \in\ \A^i_{(0)}(Y):= (\pi^{2i}_Y)_\ast \A^i(Y)\ .\end{equation}
     This forces the MCK decomposition to be of the form
     \[ \begin{split}  \pi^{2i}_Y &=  {1\over 12} H^{3-i} \times H^i\ ,\\
                            \pi^3_Y&= \Delta_Y - \sum_{i=0}^3 \pi^{2i}_Y\ .\\
                            \end{split}\]                        
   
   Assuming an MCK decomposition exists, we would have that
     \[  \Delta_Y\ \ \in\ \A^4_{(0)}(Y\times Y)\ ,\]
     with respect to the product MCK decomposition for $Y\times Y$ (this follows from the fact that MCK decompositions are self-dual, cf. \cite[Proof of Proposition 2.7]{FLV2}).    
    Combining with \eqref{0} and the definition of $Z_Y$, it would follow that
     \[ Z_Y\ \ \in\ \A^4_{(0)}(Y\times Y)\ .\]
     But 
       \[  \A^4_{(0)}(Y\times Y) =  \QQ[H^3\times H] \oplus \QQ[H^2\times H^2] \oplus \QQ[H\times H^3] \]
       is 3-dimensional, and injects into cohomology under the cycle class map. This would imply
       \[ Z_Y=0\ \ \hbox{in}\ \A^4(Y\times Y)\ ,\]
       in contradiction with Theorem \ref{main}.
      \end{proof}

\section{Cycles modulo algebraic equivalence}

In this section we consider the groups of algebraic cycles modulo algebraic equivalence 
  \[ \B^\ast(Y):= \A^\ast(Y)/{\sim_{\rm alg}}\ .\]

\subsection{An easy general result}

\begin{proposition}\label{main2} Let $Y$ be a smooth projective threefold with $\A_0(Y)=\QQ$ (in particular, $Y$ can be any Fano threefold). Then $Y$ admits an MCK decomposition modulo algebraic equivalence.
\end{proposition}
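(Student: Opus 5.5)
The plan is to use that $\A_0(Y)=\QQ$ forces the motive of $Y$ to be very simple, and then check multiplicativity component by component modulo algebraic equivalence.

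\textbf{Step 1: the motive.} By Bloch--Srinivas \cite{BS}, $\A_0(Y)=\QQ$ gives a decomposition of the diagonal. Consequently $\A^1_{hom}(Y)=0$, the group $\A^3(Y)=\QQ$, and $\A^2_{hom}(Y)$ is controlled by the intermediate Jacobian. In particular, every homologically trivial $1$-cycle is algebraically trivial, so
\[ \B^2(Y)=\A^2(Y)/\A^2_{hom}(Y)=\ima\bigl(\A^2(Y)\to H^4(Y,\QQ)\bigr). \]
Moreover (Vial, or directly from the decomposition of the diagonal) $h(Y)$ admits a CK decomposition
\[ h(Y)=\one\oplus \one(-1)^{\rho}\oplus h^3(Y)\oplus \one(-2)^{\rho}\oplus\one(-3), \]
with $\pi^{2i}_Y=\sum_k a_{i,k}\times b_{i,k}$ for cycles $b_{i,k}\in\A^i(Y)$ and dual cycles $a_{i,k}\in\A_i(Y)$, and $\pi^3_Y=\Delta_Y-\sum_i\pi^{2i}_Y$. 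Here $h^3(Y)\cong h^1(J)(-1)$ is a direct summand of $h^1$ of a curve, up to twist.

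\textbf{Step 2: reduce to motives modulo algebraic equivalence.} Pass to the category $\MM_{\rm alg}$ and work with the same projectors. Multiplicativity requires
\[ \pi^k\circ\Delta^{sm}_Y\circ(\pi^i\times\pi^j)=0 \quad\text{in } \B^6(Y^3) \text{ for } i+j\neq k. \]
Each such term is a correspondence between products of the summands $\one(\ast)$ and $h^3$. When all three indices are even, the term lies in $\B^\ast$ of a sum of Lefschetz motives; these groups inject into cohomology, and in cohomology the relation holds because cup product respects degree. When an odd index appears, the relevant group is a Hom between motives built from $h^3$'s and Lefschetz motives. Using $h^3(Y)\cong h^1(C')(-1)$ for some curve $C'$ (Jacobian summand), such a Hom reduces to Chow groups of products of at most three curves, with some factors $h^1$ and possibly twists. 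The case $i=j=3$, $k\neq 6$ requires $\Hom(h^3\otimes h^3,\one(-k))$ modulo algebraic equivalence. The main obstacle is exactly these terms involving $h^3\otimes h^3$ or $h^3\otimes h^3\otimes h^3$.

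\textbf{Step 3: handle the odd terms.} The degree of the twists forces the relevant cycle classes into a small range. The map $\Hom(h^3\otimes h^3, \one(-k))$ vanishes unless $k=3$ (weight reasons), and for $k=3$ it is $\A^1$-type cycles on $C'\times C'$ restricted to the $h^1\otimes h^1$ part. Modulo algebraic equivalence this is $\mathrm{NS}$, which injects into cohomology, so it vanishes since the cohomological relation holds. Terms $h^3\otimes\one(-a)\to h^b$ with $b$ even, or $h^3\otimes h^3\to h^3$, are also treated by weights. For instance, $\Hom(h^1\otimes h^1,h^1)$ is $\A^1$ of $C'^3$ restricted to $h^1\otimes h^1\otimes h^1$ (up to twist), and it must be checked that its $\B$-part is homologically trivial. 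The Ceresa-type phenomenon of Remark~\ref{alg} is the danger here. It is avoided because the relevant cycle is of codimension $1$ (a divisor) on a product of curves, so algebraic and homological equivalence coincide. This dimension count, with $Y$ of dimension $3$ and $h^3$ a twist by $1$, is the point to check carefully. I expect every nonzero-weight obstruction group to be a divisor class group or a $0$-cycle group mod algebraic equivalence, both of which are detected by cohomology. Combining the steps gives the vanishing for all $i+j\neq k$.
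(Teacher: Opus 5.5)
Your strategy is the paper's: use Bloch--Srinivas to make $h^3(Y)$ a summand of $h^1(C')(-1)$, then observe that modulo algebraic equivalence everything reduces to $0$-cycles and divisors on products of curves. There is, however, a gap in Step 3, which is where all the content lies. The cycle groups there are misidentified, and the conclusion is only asserted (``I expect'').

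\textbf{Errors in Step 3.} Use $h^3\otimes h^3\cong (h^1(C')\otimes h^1(C'))(-2)$, $h^3\cong h^1(C')(-1)$, and the convention $\mathrm{Hom}(h(X)(m),h(X')(n))=\A^{\dim X+n-m}(X\times X')$.
\begin{enumerate}
\item The term $h^3\otimes h^3\to h^3$ lies in (the $h^1$-part of) $\A^{2-1+2}(C'^3)=\A^3(C'^3)$. These are $0$-cycles, not divisors. The untwisted group $\mathrm{Hom}(h^1\otimes h^1,h^1)=\A^2(C'^3)$ is the group of $1$-cycles containing the Gross--Schoen cycle. What avoids the Ceresa phenomenon is that the twists raise the codimension by one; your stated reason is wrong, although the conclusion survives.
\item For $i=j=3$ and $k$ even, the target $h^k$ is a sum of copies of $\one(-k/2)$, not $\one(-k)$. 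The group is then $\A^{4-k/2}(C'\times C')$, so the only obstruction case is $k=4$. That group is the $h^1\otimes h^1$-part of $\A_0(C'\times C')$, which is in general non-zero modulo rational equivalence. No weight argument kills it; it dies because degree-$0$ zero-cycles are algebraically trivial.
\item The divisor group $\A^1(C'\times C')$ you analysed corresponds to $k=6$. That is the cup-product pairing, which is not an obstruction term. Its cohomological counterpart is non-zero, so ``it vanishes since the cohomological relation holds'' is false there.
\end{enumerate}

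\textbf{How to close the argument.} Once the twists are corrected, every obstruction group $\mathrm{Hom}(h^i\otimes h^j,h^k)$ with $i+j\neq k$ falls into one of two cases. Either it vanishes for degree reasons (for instance when all indices are even), or it is an $h^1$-part of $\A_0(C'^3)$, $\A_0(C'^2)$ or $\A^1(C')$. All of the latter vanish modulo algebraic equivalence, so your summand-by-summand argument does go through.

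\textbf{The paper's shortcut.} The paper avoids this bookkeeping. It tensors the split embedding $h(Y)\hookrightarrow h(C)(-1)\oplus\bigoplus\one(\ast)$ three times. It then reads off an injection of $\B^6(Y^3)$ into $\B^3(C^3)$ plus groups $\B^\ast(C^2)$ and $\B^\ast(C)$, in degrees where only $0$-cycles and divisors occur. Hence $\B^6_{hom}(Y^3)=0$, and all the homologically trivial multiplicativity terms vanish at once, with no case analysis.
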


\begin{proof} The famous ``decomposition of the diagonal'' argument of Bloch--Srinivas implies that there exists a curve $C$ and a split injective map of motives
  \[  h(Y)\ \hookrightarrow\ h(C)(-1) \oplus \bigoplus \one(\ast)\ \ \hbox{in}\ \MM_{\rm rat}\ \]
  (see \cite{BS} or \cite[Theorem 3.11]{V3}). In particular, $Y$ is Kimura-finite and so has a CK decomposition $\{ \pi^i_Y\}$.
  
  Taking tensor products, we obtain a split injective map of motives
    \[ h(Y^3)\ \hookrightarrow\ h(C^3)(-3) \oplus \bigoplus h(C^2)(\ast) \oplus \bigoplus h(C)(\ast) \ \oplus \bigoplus \one(\ast)\ \ \hbox{in}\ \MM_{\rm rat}\ .\]
  In particular, it follows there is an injective map of groups of cycles
    \[  \B^6(Y^3)\ \hookrightarrow\ \B^3(C^3) \oplus \bigoplus \B^\ast(C^2) \oplus \bigoplus \B^\ast(C) \oplus \QQ^r  \ ,\]
   and this injection is compatible with cycle class maps.
    As algebraic and homological equivalence coincide for zero-cycles and for divisors, this implies the vanishing of the Griffiths group
    \[ \B^6_{hom}(Y^3)=0 \]
    (in other words, $\B^6(Y^3)$ injects into cohomology, under the cycle class map).
    This ensures the required vanishing
     \[ \pi_Y^k\circ \Delta_Y^{sm}\circ (\pi_Y^i\times \pi_Y^j)=0\ \ \ \hbox{in}\ \B^{6}(Y^3)\ \ \ \hbox{for\ all\ }i+j\not=k\ .\]
     \end{proof}

\begin{remark} The above argument applies more generally to any smooth projective variety $Y$ of dimension $2m+1\ge 3$ with the property that
  \[ \A_0^{hom}(Y)= \A_1^{hom}(Y)=\ldots= \A_m^{hom}(Y)=0\ .\]
Indeed, the Bloch-Srinivas argument (in the precise form given in \cite[Theorem 3.11]{V3}) provides a split injective map of motives
  \[   h(Y)\ \hookrightarrow\ h(C)(-m) \oplus \bigoplus \one(\ast)\ \ \hbox{in}\ \MM_{\rm rat}\ \]
  for some curve $C$. It follows that
    \[ \B^{4m+2}_{hom}(Y^3)\ \hookrightarrow\ \B^{m+2}_{hom}(C^3)  \oplus \bigoplus \B_{hom}^\ast(C^2) \oplus \bigoplus \B_{hom}^\ast(C)  =0\ ,\]
    and so $Y$ has an MCK decomposition modulo algebraic equivalence.
  \end{remark}

\subsection{Tautological cycles}

\begin{definition} Let $X$ be any smooth projective variety, and $m\in\NN$ a positive integer. The $\QQ$-subalgebra of tautological cycles is defined as
  \[  R^\ast(X^m):=\bigl\langle  (p_i)^\ast(K_X), (p_{ij})^\ast(\Delta_X)\bigr\rangle\ \ \subset\ \B^\ast(X^m)\ .\]
\end{definition}

Tautological cycles (inside the Chow ring $\A^\ast(X^m)$) have been studied for curves in \cite{Ta}, \cite{Ta2} and for cubic hypersurfaces in \cite{FLV3}.

We recall the following result:

\begin{proposition}\label{equiv} Let $Y$ be a Fano threefold with Picard number 1. The following are equivalent:

\begin{enumerate}

\item the subalgebra $R^\ast(Y^m)$ injects into cohomology, under the cycle class map, for all $m\in\NN$;

\item $Y$ has an MCK decomposition modulo algebraic equivalence.
\end{enumerate}
\end{proposition}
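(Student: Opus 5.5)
The plan is to prove both directions modulo algebraic equivalence, using the standard dictionary between MCK decompositions and tautological rings (in the style of Shen--Vial, Tavakol, and \cite{FLV3}). First I will fix notation. Since $Y$ has Picard number 1 and $\A_0(Y)=\QQ$, the proof of Proposition \ref{main2} shows that $Y$ is Kimura finite. As in the proof of Corollary \ref{maincor}, any CK decomposition has $\pi^{2i}_Y=a_i\times b_i$. If an MCK decomposition exists, then $H^i\in\A^i_{(0)}$, which forces
\[ \pi^{2i}_Y={1\over d}H^{3-i}\times H^i,\qquad \pi^3_Y=\Delta_Y-\sum_i\pi^{2i}_Y,\qquad d:=H^3. \]
These projectors are polynomials in $(p_j)^\ast(K_Y)$ and $\Delta_Y$, so they are tautological. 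I will work throughout in $\B^\ast$.

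For (2)$\Rightarrow$(1), assume the decomposition above is multiplicative in $\B^\ast$. Take the product CK decomposition on $Y^m$, which is again MCK, and use the induced bigrading $\B^\ast_{(\ast)}(Y^m)$. The degree-$(0)$ part is a subring. I will show each generator lies in it. The class $(p_i)^\ast K_Y$ does so because $H\in\B^1_{(0)}(Y)$ and pullbacks respect the grading. The class $(p_{ij})^\ast\Delta_Y$ does so by self-duality of MCK decompositions, \cite[Proposition 2.7]{FLV2}, exactly as used in the proof of Corollary \ref{maincor}. Hence $R^\ast(Y^m)\subset\B^\ast_{(0)}(Y^m)$.

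It then remains to show that $\B^\ast_{(0)}(Y^m)\cap\B^\ast_{hom}=0$. For this I will use the motivic description. By Bloch--Srinivas, $h^3(Y)$ is a direct summand of $h^1(C)(-1)$, and $h^{2i}(Y)=\one(-i)$. So $\B^\ast_{(0)}(Y^m)$ is a sum of pieces of the form $(\pi^{3}\otimes\cdots)_\ast\B^\ast$ that are direct summands of $\B^\ast_{(0)}$-type groups of powers $C^k$. The difficulty is that for curves the degree-$0$ part modulo algebraic equivalence need not inject: this is exactly Remark \ref{alg}. So this route may not work, and I expect this step to be the main obstacle. The way out I would try is a weight argument. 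A tensor product of $h^3(Y)$'s is odd, and only even total weight contributes to $\B^\ast_{(0)}$. The relevant groups are $\B^{k+\ell}$ of $h^1(C)^{\otimes 2k}(\ldots)$ lying in the ``middle'' grade, and for $\B$ modulo algebraic equivalence one knows (Kimura/Voevodsky nilpotence together with the Kimura-finiteness of $h^1(C)$, since $h^1(C)^{\otimes n}$ is controlled by the Jacobian) that the grade-$0$ part of $\B^\ast(J^n)$ injects into cohomology via the Beauville decomposition modulo algebraic equivalence. Concretely, for abelian varieties $\B^\ast_{(0)}$ (the Fourier eigenspace) maps injectively to cohomology because algebraic equivalence kills the $\A_{(s)}$ with $s>0$ up to Griffiths-group issues, and I will cite the vanishing of $\B_{(0),hom}$ for the motive $h^1(C)^{\otimes 2k}(k)$, which is a summand of $h(J^{2k})$ in the appropriate degree. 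If that citation is unavailable, I would fall back on the computation of Proposition \ref{main2} for the specific triple-product relations, which is all that multiplicativity demands. I would then use that $R^\ast$ is generated by classes whose relations come from $Y^3$.

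For (1)$\Rightarrow$(2), the argument is more direct. The obstruction classes $\pi^k\circ\Delta^{sm}_Y\circ(\pi^i\times\pi^j)$ are compositions of tautological cycles, so they lie in $R^6(Y^3)$, since composition is push-pull along projections of products of the $p_{ij}^\ast\Delta$. They are homologically trivial because the cycle class of the decomposition is multiplicative in cohomology. By (1) they vanish in $\B^6(Y^3)$. This direction in fact already holds by Proposition \ref{main2} for any Fano threefold.
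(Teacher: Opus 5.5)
\paragraph{A gap in (2)$\Rightarrow$(1).} Your direction (1)$\Rightarrow$(2) is fine, and as you note it is automatic from Proposition \ref{main2}. The first half of (2)$\Rightarrow$(1) is also fine: the generators $(p_i)^\ast H$ and $(p_{ij})^\ast\Delta_Y$ lie in $\B^\ast_{(0)}(Y^m)$, hence so does $R^\ast(Y^m)$.

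The gap is the next step, where you reduce to showing $\B^\ast_{(0)}(Y^m)\cap\B^\ast_{hom}(Y^m)=0$. This is much stronger than what is needed, and it is not known.
\begin{itemize}
\item Using $h^3(Y)\cong h^1(C)(-1)$ (or a direct summand of it, via Bloch--Srinivas), the grade-zero part contains the pieces $\B^{3k}\bigl(h^3(Y)^{\otimes 2k}\bigr)\cong\B^{k}\bigl(h^1(C)^{\otimes 2k}\bigr)$. These are exactly the endomorphisms of $h^1(C)^{\otimes k}$ modulo algebraic equivalence.
\item Their injectivity into cohomology is an instance of Beauville's conjecture that $\B^\ast_{(0)}$ injects into cohomology, for powers of the Jacobian modulo algebraic equivalence. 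This is open.
\item The reasons you give do not hold. Algebraic equivalence does not kill the grades $s>0$: the Ceresa cycle lives in odd Beauville grades and is non-zero modulo algebraic equivalence for very general $C$ (Ceresa's theorem, which underlies Remark \ref{alg}). Kimura finite-dimensionality only makes homologically trivial endomorphisms nilpotent, not zero.
\item The fallback does not close the gap either. Proposition \ref{main2} only controls $\B^6(Y^3)$. It is also false that all cohomological relations in $R^\ast(Y^m)$ follow from relations on $Y^3$: because $\dim H^3(Y)=b_3$ is finite, genuinely new relations appear once $m$ is of the order of $b_3$.
\end{itemize}

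\paragraph{The missing idea.} You should prove injectivity only for the tautological subring. This is the Voisin--Yin mechanism behind Theorem \ref{vy}, which the cited proof \cite[Theorem 5.1]{more} adapts.
\begin{enumerate}
\item Use the explicit projectors and multiplicativity on $Y^3$ to normalise tautological classes. For example, $p_{12}^\ast\pi^3_Y\cdot p_{23}^\ast\pi^3_Y$ lies in the $h^3\otimes h^6\otimes h^3$ component. Since $\pi^6_Y=\frac{1}{12}[Y]\times H^3$, this forces it to equal $\frac{1}{12}\,p_{13}^\ast\pi^3_Y\cdot p_2^\ast(H^3)$.
\item Iterating such reductions, $R^\ast(Y^m)$ is spanned by products of classes $(p_i)^\ast H^j$ with ``matchings'' $\prod_s p_{a_sb_s}^\ast\pi^3_Y$ over disjoint pairs of factors.
\item In cohomology these matchings are $\mathrm{Sp}(H^3(Y))$-invariant tensors. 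By the first and second fundamental theorems of invariant theory, the linear relations among them are generated by a single quasi-idempotent of the Brauer algebra acting on some $h^3(Y)^{\otimes k}$. This relation reflects $\dim H^3(Y)=b_3$.
\item The direct summand of $h^3(Y)^{\otimes k}$ cut out by this quasi-idempotent has zero cohomology. Since $h^3(Y)$ is Kimura finite-dimensional, that summand is zero. So the relation already holds in $\A^\ast$, hence in $\B^\ast$, and every cohomological relation in $R^\ast(Y^m)$ lifts.
\end{enumerate}
Nothing about the rest of $\B^\ast_{(0)}(Y^m)$ is needed.
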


\begin{proof} This is proven modulo rational equivalence in \cite[Theorem 5.1]{more}; the same proof also works modulo algebraic equivalence.
\end{proof}

\begin{corollary}\label{taut} Let $Y$ be a very general prime Fano threefold of genus 7, and let $C$ be its dual curve. The injective map
  \[   \B^\ast( C^m)\ \hookrightarrow\ \B^\ast(Y^m) \]
  provided by Proposition \ref{dualchow} does not send $R^\ast(C^3)$ to $R^\ast(Y^3)$.
\end{corollary}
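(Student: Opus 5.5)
Suppose, for contradiction, that the injective map $\B^\ast(C^m)\hookrightarrow\B^\ast(Y^m)$ coming from Proposition \ref{dualchow} sends $R^\ast(C^3)$ into $R^\ast(Y^3)$. The idea is to play Proposition \ref{main2} against the failure of MCK modulo algebraic equivalence for the very general curve (Remark \ref{alg}). The dual curve of a very general $Y$ is a very general curve of genus $7$, by Proposition \ref{surj}: the duality map from $B$ to $\mathcal M_7$ is dominant, so a very general point of $B$ lands outside any countable union of proper closed subsets of $\mathcal M_7$.

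The first step is to make $R^\ast(Y^3)$ inject into cohomology. By Proposition \ref{main2}, $Y$ has an MCK decomposition modulo algebraic equivalence. A very general $Y$ has Picard number $1$, so Proposition \ref{equiv} applies and gives that $R^\ast(Y^m)\subset\B^\ast(Y^m)$ injects into cohomology for every $m$, in particular for $m=3$.

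Next we transfer this to $C^3$. The map $\B^\ast(C^3)\hookrightarrow\B^\ast(Y^3)$ is induced by a correspondence: the third tensor power of the correspondence realizing $h(C)(-1)$ as a direct summand of $h(Y)$. It is therefore compatible with cycle class maps, and it is injective on cohomology as well, since it comes from a split injection of motives. Now take $\alpha\in R^\ast(C^3)$ that is homologically trivial. Its image lies in $R^\ast(Y^3)$ by hypothesis and is homologically trivial, so it vanishes in $\B^\ast(Y^3)$ by the first step. Injectivity then gives $\alpha=0$ in $\B^\ast(C^3)$. Hence $R^\ast(C^3)$ injects into cohomology.

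The final step, which is the main obstacle, is to contradict this on the curve side. I would use the Ceresa/Gross--Schoen argument behind Remark \ref{alg}. The modified diagonal $\Gamma_3(C,z)$ with $z=K_C/(2g-2)$ is built from $\Delta^{sm}_C$, the partial diagonals $p_{ij}^\ast\Delta_C$ and the pullbacks $p_i^\ast(K_C)$, so it lies in $R^\ast(C^3)$; note that $\Delta^{sm}_C=p_{12}^\ast\Delta_C\cdot p_{23}^\ast\Delta_C$. It is homologically trivial. For a very general curve of genus $g\ge3$ it is nonzero modulo algebraic equivalence: by the argument cited in Remark \ref{alg} (\cite[Example 3.3]{FLV2}), its vanishing modulo algebraic equivalence would force the Ceresa cycle to be algebraically trivial, which contradicts Ceresa's theorem. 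The delicate points are whether the map of Proposition \ref{dualchow} (with its Tate twists) is defined on all of $\B^\ast(C^3)$ with all degree shifts accounted for, and checking that the Gross--Schoen/Ceresa non-vanishing holds modulo algebraic equivalence and not only modulo rational equivalence. Granting these, the nonzero homologically trivial element $\Gamma_3(C,z)\in R^2(C^3)$ contradicts the previous step, which proves the corollary.
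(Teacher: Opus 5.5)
Your proof is correct and is essentially the paper's own argument. Propositions \ref{main2} and \ref{equiv} make $R^\ast(Y^3)$ inject into cohomology, the cycle-class-compatible injection $\B^\ast(C^3)\hookrightarrow\B^{\ast+3}(Y^3)$ transfers this to $R^\ast(C^3)$, and the homologically trivial modified diagonal $\Gamma_3(C,\frac{1}{12}K_C)\in R^2(C^3)$ gives the contradiction, since it is nonzero in $\B^2(C^3)$ by Remark \ref{alg} for the very general $C$ (Proposition \ref{surj}); the two ``delicate points'' you flag are harmless, because the twist only shifts codimension by $3$ and the non-vanishing modulo algebraic equivalence is exactly what Remark \ref{alg} provides, which is also all the paper uses.
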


\begin{proof} This uses the fact that $C$ is a very general curve of genus 7 (Proposition \ref{surj}), and so $C$ does not admit an MCK decomposition modulo algebraic equivalence (see Remark \ref{alg}). Equivalently, the Gross--Schoen modified diagonal (as defined in the proof of Proposition \ref{hyper}) does not vanish modulo algebraic equivalence.
The tautological algebra $R^\ast(C^3)$ contains the Gross--Schoen modified diagonal 
  \[ \Gamma_3(C,{1\over 12}K_C)\ \ \in \B^2(C^3)\ ,\]
which is homologically trivial. By what we have just said, $ \Gamma_3(C^3,{1\over 12}K_C)$ is non-zero in $\B^2(C^3)$.

On the other hand, the tautological algebra $R^\ast(Y^m)$ injects into cohomology for all $m\in\NN$ (this follows from Proposition \ref{equiv} and Proposition \ref{main2}).

These two facts settle the corollary. Indeed, assume by contradiction that $R^\ast(C^3)$ is sent to $R^\ast(Y^3)$. Then $R^\ast(C^3)$ would inject into cohomology, and so the modified diagonal of $C$ would be zero in $\B^2(C^3)$, which is false.
\end{proof}

 \vskip1cm
\begin{nonumberingt} I am very grateful to the referee for helpful suggestions and corrections.
Many thanks to Kai for his beautiful interpretation of Schumann op. 18.
\end{nonumberingt}

\end{document}